\documentclass{amsart}
\usepackage[utf8]{inputenc}
\usepackage{xcolor}
\usepackage{tablefootnote}
\usepackage[colorlinks=true,linkcolor=blue,urlcolor=blue,citecolor=blue, pagebackref]{hyperref}
\usepackage[alphabetic]{amsrefs}
\usepackage{enumitem,amssymb}
\usepackage{tikz-cd}

\newcommand\op{\operatorname}
\newcommand\C{\mathbb{C}}
\renewcommand{\op}{\operatorname}
\newcommand{\gO}{\mathfrak{g}(\mathcal{O})}

\newcommand{\cyc}{\op{Gr}_{G,c(\lambda, \mu, \nu)}}

\newtheorem{lemma}{Lemma}
\newtheorem{remark}{Remark}
\newtheorem{theorem}{Theorem}
\newtheorem{prop}{Proposition}
\newtheorem{corollary}{Corollary}
\newtheorem{example}{Example}

\newtheorem{notation}{Notation}
\newtheorem{summary}{Summary}
\newtheorem{listy}{List}

\author{}
\date{}

\begin{document}
\title[Root Components and Geometric Satake]{Multiplicity in Root Components Via Geometric Satake}
\maketitle

\begin{minipage}{\textwidth}
\begin{center}
{\small
\begin{tabular}{cc}MARC BESSON & $\text{SAM JERALDS}^{*}$\footnote[0]{$~^*$corresponding author} \\
University of North Carolina & University of North Carolina \\
Chapel Hill, NC 27599 & Chapel Hill, NC 27599 \\
{\tt marmarc@live.unc.edu }& {\tt sjj280@live.unc.edu }\\
\end{tabular}\\\vspace{0.3in}
\begin{tabular}{c}
JOSHUA KIERS\\
Ohio State University \\
281 W Lane Ave \\
Columbus, OH 43201 \\
{\tt kiers.2@osu.edu}
\end{tabular}
}
\end{center}
\begin{abstract} In this note we explicitly construct 
top-dimensional components of certain cyclic convolution varieties. These components correspond (via the geometric Satake equivalence) to irreducible summands $V(\lambda+\mu-N\beta) \subset V(\lambda) \otimes V(\mu)$ for $SL_{n+1}(\C)$, where $N\ge 1$ and $\beta$ is a positive root. 
Furthermore, we deduce from these constructions a nontrivial lower bound on the multiplicity of these subrepresentations when $\beta$ is not a simple root. 
Finally, we demonstrate that not all such top-dimensional components can be realized as closures of orbits. 
\end{abstract}

{\bf Keywords: } Affine Grassmannian, geometric Satake, representation theory
\\
\end{minipage}

\newpage

\section{Introduction}

Let $G$ be a reductive linear algebraic group over $\mathbb{C}$, and let $G^{\vee}$ denote its Langlands dual group, also a reductive linear algebraic group over $\mathbb{C}$. Then the celebrated geometric Satake equivalence, in broad strokes, asserts that the geometry of the affine Grassmannian of $G$, denoted $\op{Gr}_G$, is intimately connected to the combinatorics of the category of finite-dimensional representations, $\op{Rep}(G^{\vee})$, for the Langlands dual group. We approach a small part of this large field in this work. On the representation theory side, we will be interested in the tensor decomposition problem. Namely, let $V(\lambda)$ and $ V(\mu)$ be finite-dimensional irreducible representations of $G^{\vee}$. Since  every finite-dimensional $G^\vee$ representation is completely reducible, we have the following decomposition, writing a tensor product of representations as a direct sum of irreducible representations: $V(\lambda) \otimes V(\mu) = \bigoplus V(\nu_i)^{\oplus m^{\nu_i}_{\lambda, \mu}}$. The tensor decomposition problem seeks to understand which highest weights $\nu_i$ appear in this decomposition, as well as their multiplicities $m^{\nu_i}_{\lambda, \mu}$.

This representation-theoretic problem can be reframed in a geometric fashion, due to the geometric Satake equivalence, in the study of cyclic convolution varieties $\cyc$. These are realized as (usually reducible) subvarieties in a product of affine Grassmannians $(\op{Gr}_G)^3$, and are remarkable for the following reason: the number of irreducible components of $\op{Gr}_{G ,c(\lambda, \mu, \nu)}$ of maximal dimension, known to be $\langle \rho, \lambda+\mu+\nu \rangle$, is precisely the multiplicity $m^{\nu^*}_{\lambda, \mu}$, where $\nu^*$ is the dual weight $-w_0\nu$.

Despite this remarkable property, the geometric structure of cyclic convolution varieties remains challenging and opaque in some respects. Even producing a point inside of a given convolution variety $\op{Gr}_{G, (\lambda, \mu, \nu)}$ can be highly nontrivial. Moreover, many geometric spaces which appear in geometric representation theory are endowed with stratifications which enable one to understand their geometry (consider flag varieties, spherical varieties, Springer fibers, and Schubert or affine Schubert varieties, to name a few). In contrast with this, understanding stratifications of cyclic convolution varieties remains an open problem. In \cite{Kiers}, it was shown that certain components of special cyclic convolution varieties are closures of diagonal left $G(\mathcal{O})$-orbits. In this work, we do the same for a new class of cyclic convolution varietes. However, we also produce an example of a maximal, irreducible component of a cyclic convolution variety which is \textit{not} a closure of a $G(\mathcal{O})$-orbit. We believe this is the first time in the literature that such an example appears. \\ 

We now provide the precise formulation of our main result. For notations, see Section 2. 
Let $G=PGL_{n+1}(\mathbb{C})$, with Langlands dual group $G^{\vee}=SL_{n+1}(\mathbb{C})$. Suppose that $\beta$ is a positive root in ${\Phi}_G$ and coweights $\lambda, \mu \in X_*(T)$ satisfy 

\begin{enumerate}
	\item[(1)] $\lambda+\mu-N\beta^\vee$ is dominant.
	\item[(2)] If $\langle \alpha_i,\lambda\rangle<N$ or $\langle\alpha_i,\mu\rangle < N$, then $\beta^\vee-\alpha_i^\vee \not \in \Phi_{G^\vee} \sqcup \{0\}$.
\end{enumerate}

We then produce irreducible components of a certain $\cyc$ as given by the following theorem.

\begin{theorem} \label{geom}
Let $\lambda, \mu \in X_{*}(T)$ and $\beta \in \Phi_{G}$ satisfy the conditions (1) and (2) above, and set $\nu=-w_0(\lambda+\mu-N\beta^\vee)$. Then 
\begin{enumerate}
\item[(i)] There exists a point $\xi \in \op{Gr}_{G,c(\lambda, \mu, \nu)}$ such that $\overline{G(\mathcal{O}).\xi}$ has maximal dimension $\langle \rho, 2 \lambda+2\mu-N\beta^{\vee} \rangle = \langle \rho, \lambda+\mu+\nu\rangle$.

\item[(ii)] Moreover, if $\beta$ is not a simple root, there exist two such disjoint $G(\mathcal{O})$-orbits. 

\end{enumerate}
\end{theorem}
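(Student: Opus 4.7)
The plan is to construct explicit points of the cyclic convolution variety as pairs of cosets of products in $G(\mathcal{K})$ involving root subgroups and torus elements, and to verify by a direct dimension count that their $G(\mathcal{O})$-orbit closures realize top-dimensional components. For part (i), the candidate will be of the form
$$\xi = \bigl(\,[\,t^\lambda \cdot t_0\,],\; [\,t^\lambda \cdot h \cdot t_0\,]\,\bigr) \in \op{Gr}_G \times \op{Gr}_G,$$
where $h \in G(\mathcal{K})$ is built from the root subgroup element $u_\beta(a t^{-N})$ combined with suitable torus elements. The first relative position from $t_0$ to $\xi_1$ is automatically $\lambda$ by dominance; the element $h$ is chosen so that $h \in G(\mathcal{O}) \cdot t^\mu \cdot G(\mathcal{O})$ (giving the second relative position $\mu$) while $t^\lambda h \in G(\mathcal{O}) \cdot t^{\lambda+\mu-N\beta^\vee} \cdot G(\mathcal{O})$ (giving the third relative position $\nu = -w_0(\lambda+\mu-N\beta^\vee)$). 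The existence of such an $h$ is guaranteed by the standard decomposition $u_\beta(a t^{-N}) \in G(\mathcal{O}) \cdot t^{N\beta^\vee} \cdot G(\mathcal{O})$ combined with hypothesis (1) ensuring that $\lambda+\mu-N\beta^\vee$ is dominant.

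The main step is the dimension count. One computes $\dim \overline{G(\mathcal{O}).\xi}$ by analyzing the Lie-algebra stabilizer of $\xi$ inside $\mathfrak{g}(\mathcal{O})$, decomposed along root spaces $\mathfrak{g}_\alpha(\mathcal{O})$. For each root $\alpha$, the stabilizing subspace in $\mathfrak{g}_\alpha(\mathcal{O})$ is determined by valuation conditions on the coefficient of $E_\alpha$, arising from both $\xi_1$ and $\xi_2$; the latter involves the adjoint action of $h$ on $E_\alpha$. When $\alpha+\beta \in \Phi_G$, the nonzero commutator $[E_\alpha,E_\beta]$ contributes an extra valuation constraint in the $E_{\alpha+\beta}$ direction, potentially raising the stabilizer dimension. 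Hypothesis (2), stating $\beta^\vee - \alpha_i^\vee \notin \Phi_{G^\vee} \sqcup \{0\}$ precisely in the troublesome cases of simple roots $\alpha_i$ with $\langle\alpha_i,\lambda\rangle<N$ or $\langle\alpha_i,\mu\rangle<N$, is designed (using in type $A$ the canonical identification of roots and coroots) to control these commutators so that the total codimension of the stabilizer sums to the desired $\langle\rho, 2\lambda+2\mu-N\beta^\vee\rangle = \langle\rho,\lambda+\mu+\nu\rangle$.

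For part (ii), when $\beta$ is not simple, one constructs a second point $\xi'$ by a different choice of $h$---for example, replacing $u_\beta$ by a product $u_{\beta_1}\cdot u_{\beta_2}$ for a decomposition $\beta = \beta_1 + \beta_2$ into positive roots, or by exploiting a different lift of the shift $-N\beta^\vee$---yielding a geometrically distinct configuration. Disjointness of $\overline{G(\mathcal{O}).\xi}$ and $\overline{G(\mathcal{O}).\xi'}$ is then established by exhibiting a separating invariant, such as an Iwahori-orbit label on the second coordinate or a $T$-fixed limit present in one closure and not the other. Non-simplicity of $\beta$ is essential: for simple $\beta$, a Weyl-group symmetry would identify the two constructions. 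The main obstacle throughout is the dimension count in part (i), specifically the delicate analysis by which hypothesis (2) prevents parasitic stabilizer contributions at each level of the $\mathcal{O}$-adic filtration; establishing rigorous disjointness in (ii) is a secondary challenge requiring a carefully chosen distinguishing invariant.
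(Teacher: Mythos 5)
Your overall framework is reasonable (explicit point, tangent-space dimension count via valuation conditions, condition (2) controlling commutator contributions), but there is a concrete gap in part (i). Your candidate middle element $h$ is built around a single root-subgroup element $u_\beta(at^{-N})$, possibly adjusted by torus elements. The paper observes (and this is the pivot of the whole construction) that if $\beta$ is not a simple root, the $G(\mathcal{O})$-orbit of the point $([\lambda],\, x_\beta(t^{\langle\beta,\lambda\rangle-N})[\lambda+\mu-N\beta^\vee],\,[0])$ has strictly \emph{sub}maximal dimension: its stabilizer is too large. Torus corrections cannot fix this, because the defect comes from additional root directions $\alpha$ for which $\alpha+\beta$ or $\alpha-\beta$ lies in $\Phi$ but lands in $W$ without the intended constraint. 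The remedy in the paper is to replace $x_\beta$ by a whole unipotent product over one ``chain'' of roots ending at $\beta$: with $\beta=\alpha_p+\cdots+\alpha_q$, one takes $x=\prod_{i=p}^q x_{\alpha_{p,i}}(t^{\langle\alpha_{p,i},\lambda\rangle-N})$. Without this modification, your dimension count in part (i) would not close, so the argument as written would fail precisely in the non-simple case, which is the interesting one for part (ii).

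For part (ii), your idea of using a different unipotent built from a decomposition of $\beta$ is pointing in the right direction in spirit, but is not what the paper does and is not developed enough to carry the proof. The paper produces the second point by applying the Dynkin diagram automorphism $\sigma$ of $PGL_{n+1}$ (extended to $G(\mathcal{K})$ via $t\mapsto -t$) to the first construction; concretely this replaces the left-anchored chain $\{\alpha_{p,i}\}_{i=p}^q$ by the right-anchored chain $\{\alpha_{i,q}\}_{i=p}^q$. Since $\sigma$ is an isomorphism of the relevant cyclic convolution varieties, the dimension count is inherited for free rather than redone. The disjointness is then proved not by a separating invariant but directly: assuming $\tilde\xi=g\xi$ with $g\in G(\mathcal{O})$ one extracts divisibility constraints on the matrix entries of $g$ that force $g$ to be non-invertible. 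Your proposed separating-invariant approach (Iwahori label, $T$-fixed limits) could in principle work, but you would need to name the invariant and prove it distinguishes the orbits; as stated it is a placeholder rather than an argument.
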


Closures of $G(\mathcal{O})$-orbits are always irreducible, as $G(\mathcal{O})$ is connected. Thus we get the following corollary, strengthening a result of Kumar \cite{K} and Wahl \cite{W}. 

\begin{corollary} \label{multiplicity}
Let $N\ge 1$. For $G^\vee = SL_{n+1}(\C)$, let $\beta^\vee$ be a positive root and $\lambda, \mu$ dominant weights satisfying the conditions $(1)$ and $(2)$ as above.
Then $V(\lambda+\mu-N\beta^\vee)$ appears in $V(\lambda)\otimes V(\mu)$. And, if $\beta^\vee$ is not a simple root, then $V(\lambda+\mu-N\beta^\vee)$ appears with multiplicity at least $2$.
\end{corollary}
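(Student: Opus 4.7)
The plan is to transfer the geometric conclusion of Theorem \ref{geom} directly through the geometric Satake equivalence, using the dictionary recalled in the introduction: the multiplicity $m^{\nu^*}_{\lambda,\mu}$ equals the number of top-dimensional irreducible components of $\op{Gr}_{G,c(\lambda,\mu,\nu)}$, where the maximal dimension is $\langle \rho, \lambda+\mu+\nu\rangle$. Setting $\nu = -w_0(\lambda+\mu-N\beta^{\vee})$, we have $\nu^{*} = \lambda+\mu-N\beta^{\vee}$, so that producing top-dimensional components of $\op{Gr}_{G,c(\lambda,\mu,\nu)}$ is exactly the same as producing copies of $V(\lambda+\mu-N\beta^{\vee})$ in $V(\lambda)\otimes V(\mu)$.

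For the first claim, I would invoke Theorem \ref{geom}(i) to obtain a point $\xi$ whose $G(\mathcal{O})$-orbit closure $\overline{G(\mathcal{O}).\xi}$ is a closed, irreducible (since $G(\mathcal{O})$ is connected), subvariety of $\op{Gr}_{G,c(\lambda,\mu,\nu)}$ of the maximal possible dimension $\langle\rho,\lambda+\mu+\nu\rangle$. A closed irreducible subset of a variety whose dimension matches the maximum dimension of an irreducible component must in fact be an irreducible component; hence $\overline{G(\mathcal{O}).\xi}$ is a top-dimensional component, yielding $m^{\lambda+\mu-N\beta^{\vee}}_{\lambda,\mu}\ge 1$.

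For the second claim, when $\beta$ is not simple, Theorem \ref{geom}(ii) supplies two disjoint $G(\mathcal{O})$-orbits whose closures $\overline{G(\mathcal{O}).\xi_1}$ and $\overline{G(\mathcal{O}).\xi_2}$ both attain the maximal dimension. By the argument of the previous paragraph, each is an irreducible component. I would then observe that they are genuinely distinct components: if they coincided, then the orbits $G(\mathcal{O}).\xi_1$ and $G(\mathcal{O}).\xi_2$, being dense in the same irreducible component, would have to meet, contradicting disjointness. (Strictly speaking, one needs that each orbit is open and dense in its closure; this is standard for orbits of a connected algebraic group acting on a variety.) Hence there are at least two top-dimensional irreducible components, and Satake gives $m^{\lambda+\mu-N\beta^{\vee}}_{\lambda,\mu}\ge 2$.

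The main logical content therefore lies entirely in Theorem \ref{geom}; the corollary is a clean extraction via geometric Satake, and the only subtle point to verify carefully is the standard fact that two disjoint orbit closures of the correct maximal dimension give rise to two distinct irreducible components, which follows from density of each orbit in its closure.
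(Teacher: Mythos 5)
Your proof is correct and takes essentially the same route as the paper. The paper derives the corollary from Theorem \ref{geom} in a single sentence — noting that closures of $G(\mathcal{O})$-orbits are irreducible because $G(\mathcal{O})$ is connected — and leaves the rest implicit; your write-up simply spells out the same Satake dictionary and the standard fact that two disjoint orbits, each open and dense in its (top-dimensional, hence component) closure, cannot have equal closures.
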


The layout of the paper is as follows: in Section 2 we introduce notation and provide some basic results on the geometry of cyclic convolution varieties, as well as the relevant representation theory. In Section 3 we construct the point $\xi$ belonging to $\cyc$. In Section 4 we find the dimension of the $G(\mathcal{O})$-orbit by passing to the tangent space. In Section 5 we further produce a second point $\tilde\xi$ with the same properties if $\beta$ is not a simple root. Lastly in Section 6 we examine the failure of the orbit method, showing in general that not all irreducible components are closures of $G(\mathcal{O})$-orbits on points.

\section{Preliminaries}
\subsection{Notation and Conventions}
Let $G$ be a reductive, connected algebraic group over $\C$, with a choice of maximal torus $T$ and Borel $B$, and set of roots $\Phi_{G}$. We will write $\Phi_G^+$ for the set of positive roots, and denote the simple roots with respect to the $T$-action on $B$ by $\{\alpha_i\}$. Let $G^\vee$ be the associated Langlands dual group with maximal torus and Borel subgroups $T^\vee$ and $B^\vee$, respectively, and set of roots $\Phi_{G^\vee}$. The associated Weyl group for $G$ and $G^\vee$ is denoted by $W$, with longest element $w_0$. 
We write $X_*(T) = \op{Hom}(\C^*,T)$ for the coweight lattice of $T$ and $X^*(T) = \op{Hom}(T,\C^*)$ for the weight lattice of $T$. We use $\{\varpi_i\} \subset X_*(T)$ to denote the basis of coweights dual to the simple roots $\{\alpha_i\}$; i.e., $\langle\alpha_i,\varpi_j\rangle = \delta_{i,j}$. If $\lambda \in X_*(T)$ is a dominant coweight, then it can be regarded as a dominant weight in $X^*(T^{\vee})$. We write $V(\lambda)$ to denote the finite-dimensional representation of $G^{\vee}$ of highest weight $\lambda$. We will also make use of $\rho=\frac{1}{2} \sum_{\alpha \in \Phi^+_G} \alpha$, the half sum of positive roots of $G$. We emphasize that while we work with $G^{\vee}$-representations, our notation is oriented around $G$: $\lambda, \mu, \nu, \varpi_i$ will be coweights of $T \subset G$, $\alpha_i, \beta$ will be roots of $T$. 
For a root $\beta$, we denote by $s_{\beta}\in W$ the reflection given by $\beta$. 
We also choose a pinning, denoted by $x_{\alpha}$, which gives embeddings $x_{\alpha}: \mathbb{G}_a \rightarrow G$ for each root $\alpha$, satisfying certain compatibilities.

\subsection{Affine Grassmannians and cyclic convolution varieties} Set $\mathcal{K} = \C((t))$ and $\mathcal{O} = \C[[ t ]]$. Then the affine Grassmannian $\op{Gr}_G$ associated to $G$ is given by the quotient $\op{Gr}_G:=G(\mathcal{K})/G(\mathcal{O})$. For every coweight $\lambda: \C^\times \to T$, there is an induced element $t^\lambda \in G(\mathcal{K})$. By the Cartan decomposition, the cosets $[\lambda]:= t^\lambda G(\mathcal{O})$ for dominant integral coweights $\lambda$  give a complete set of representatives for the left-$G(\mathcal{O})$ orbits inside the affine Grassmannian. We use this to define the \emph{distance} (or \emph{relative position}) $d(L_1, L_2)$ for points $(L_1, L_2)$ in $(\op{Gr}_G)^2$ to be the unique dominant coweight $\lambda$ such that, as left-$G(\mathcal{K})$ orbits in $(\op{Gr}_G)^2$, we have 
$$
G(\mathcal{K}).(L_1, L_2) = G(\mathcal{K}).([0],[\lambda])
$$
and write $d(L_1, L_2)=[\lambda]$. Following 
\cite{H}*{\S 2}, \cite{Kam}*{\S1},
we define the cyclic convolution variety $\op{Gr}_{G,c(\vec\lambda)}$ for a collection of dominant coweights $\vec{\lambda}=(\lambda_1, \dots \lambda_s)$ as 
$$
\op{Gr}_{G,c(\vec\lambda)}: = \left\{(L_1,\hdots,L_s)\in (\op{Gr}_G)^s \mid  L_s = [0], d(L_{i-1},L_i) = [\lambda_i]~ \forall i   \right\};
$$
here $L_0:=L_s$.  It is a finite-dimensional, complex algebraic variety whose dimension is always at most $\langle \rho, \sum \lambda_i\rangle$. Observe that it has a natural diagonal action of $G(\mathcal{O})$ on the left, since $g[0] = [0]$ for $g\in G(\mathcal{O})$ and $d(x,y) = d(gx, gy)$ for any $g\in G(\mathcal{K})$.

\subsection{Cyclic convolution varieties and root components} 
Recall the tensor decomposition problem of determining the components of the $G^\vee$-module
$$
V(\lambda) \otimes V(\mu) \simeq \bigoplus V(\nu_i)^{\oplus m_{\lambda, \mu}^{\nu_i}},
$$
as in the introduction. This is a well-studied problem, with various algebraic, geometric, and combinatorial methods developed to determine specific components $V(\nu_i)$; see \cite{K2} for an overview of this topic. One example of a family of components which will be our primary interest is the \emph{root components}, first constructed by Kumar. 

\begin{theorem}[\cite{K}] \label{original}
Let $G^\vee$ be a semisimple, simply-connected algebraic group over $\C$. Suppose that $\beta^\vee \in \Phi_{G^\vee}$ is a positive root of $G^\vee$ and $\lambda, \mu$ are dominant weights of $T^\vee$ satisfying 
\begin{enumerate}
    \item[(1')] $\lambda+\mu-\beta^\vee$ is dominant.
    \item[(2')] If $\langle\alpha_i,\lambda\rangle=0$ or $\langle\alpha_i,\mu\rangle=0$, then $\beta^\vee-\alpha_i^\vee \not \in \Phi_{G^\vee} \sqcup \{0\}$. 
\end{enumerate}
Then $V(\lambda+\mu-\beta^\vee)\subset V(\lambda)\otimes V(\mu)$.
\end{theorem}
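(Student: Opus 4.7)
I propose to establish the theorem by constructing an explicit nonzero highest-weight vector $w$ of weight $\lambda+\mu-\beta^\vee$ in $V(\lambda)\otimes V(\mu)$. By complete reducibility of finite-dimensional $G^\vee$-modules, producing such a $w$ is equivalent to the claimed inclusion.

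Let $v_\lambda, v_\mu$ be highest-weight vectors and fix nonzero lowering operators $f_{\gamma^\vee} \in \mathfrak{g}^\vee$ for each positive root $\gamma^\vee$, so that $[e_{\alpha_i^\vee}, f_{\gamma^\vee}] = c_{i,\gamma}\,f_{\gamma^\vee-\alpha_i^\vee}$ with $c_{i,\gamma}$ a nonzero scalar when $\gamma^\vee - \alpha_i^\vee \in \Phi_{G^\vee}^+$, equal to $h_{\alpha_i^\vee}$ when $\gamma^\vee = \alpha_i^\vee$, and zero otherwise. Set $a = \langle\beta,\lambda\rangle$, $b = \langle\beta,\mu\rangle$; conditions (1') and (2') together force $a, b > 0$. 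Indeed, when $\beta = \alpha_k$ is simple, (2') applied with $i=k$ directly gives $\langle\alpha_k,\lambda\rangle, \langle\alpha_k,\mu\rangle > 0$ (since otherwise the conclusion $\beta^\vee - \alpha_k^\vee \notin \Phi_{G^\vee}\sqcup\{0\}$ fails, as $\beta^\vee - \alpha_k^\vee = 0$); when $\beta$ is non-simple, some $\alpha_i$ in its support satisfies $\beta^\vee - \alpha_i^\vee \in \Phi_{G^\vee}^+$, so (2') again forces $\langle\alpha_i,\lambda\rangle, \langle\alpha_i,\mu\rangle > 0$ and hence $a, b > 0$. I then take as initial ansatz
\[
w_0 \;:=\; b\,(f_{\beta^\vee} v_\lambda)\otimes v_\mu \;-\; a\, v_\lambda \otimes (f_{\beta^\vee} v_\mu),
\]
a nonzero weight-$(\lambda+\mu-\beta^\vee)$ vector. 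A direct $\mathfrak{sl}_2$-computation in the triple $\{e_{\beta^\vee},h_{\beta^\vee},f_{\beta^\vee}\}$ shows $e_{\beta^\vee}\,w_0 = 0$, and $e_{\alpha_i^\vee}\,w_0 = 0$ automatically for every simple $\alpha_i^\vee$ with $\beta^\vee - \alpha_i^\vee \notin \Phi_{G^\vee}^+ \sqcup\{0\}$.

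The remaining obstructions occur for simple $\alpha_i^\vee$ with $\beta^\vee - \alpha_i^\vee \in \Phi_{G^\vee}^+$; here
\[
e_{\alpha_i^\vee}\,w_0 \;=\; c_{i,\beta}\cdot\bigl( b\,f_{\beta^\vee-\alpha_i^\vee}\,v_\lambda \otimes v_\mu \;-\; a\, v_\lambda \otimes f_{\beta^\vee-\alpha_i^\vee}\,v_\mu \bigr)
\]
need not vanish. I would absorb this residual by adding correction terms of the form $f_{\beta^\vee-\alpha_i^\vee}\,v_\lambda \otimes f_{\alpha_i^\vee}\,v_\mu$ and $f_{\alpha_i^\vee}\,v_\lambda \otimes f_{\beta^\vee-\alpha_i^\vee}\,v_\mu$, which share the correct weight $\lambda+\mu-\beta^\vee$, choosing coefficients so that $e_{\alpha_i^\vee}$ applied to the sum cancels the obstruction. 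Condition (2') is essential here: it guarantees that $\langle\alpha_i,\lambda\rangle$ and $\langle\alpha_i,\mu\rangle$ are strictly positive whenever such a correction is required, so $f_{\alpha_i^\vee}\,v_\lambda$ and $f_{\alpha_i^\vee}\,v_\mu$ are nonzero and the cancellation is actually achievable.

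The main technical obstacle, and where I expect the real work to lie, is iterating the corrections coherently: a term added to kill the $e_{\alpha_i^\vee}$-obstruction may itself produce a new obstruction under another $e_{\alpha_j^\vee}$, involving $f_{\beta^\vee - \alpha_i^\vee - \alpha_j^\vee}$ or further-descended lowerings. The structural fact that makes the induction terminate is that each new obstruction involves a positive root strictly lower in the height order than $\beta^\vee$, so after finitely many iterations one descends to obstructions involving only simple roots (or zero), resolved by the basic $\mathfrak{sl}_2$ identity already used for $w_0$; condition (2') must be propagated carefully through each step to ensure all required correction factors remain nonzero. Non-vanishing of the final vector $w$ is clear because no correction term involves the monomial $(f_{\beta^\vee} v_\lambda)\otimes v_\mu$, whose coefficient stays $\pm b \neq 0$. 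Thus $w$ is a nonzero highest-weight vector of weight $\lambda+\mu-\beta^\vee$ in $V(\lambda)\otimes V(\mu)$, generating the desired $G^\vee$-submodule.
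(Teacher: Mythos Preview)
The paper does not give its own proof of this statement: Theorem~\ref{original} is quoted from Kumar's paper \cite{K} as background, and the present paper's contribution is the separate geometric construction (via cyclic convolution varieties, for $G=PGL_{n+1}$) that establishes Theorem~\ref{geom} and hence Corollary~\ref{multiplicity}. So there is no in-paper proof to compare against; Kumar's original argument proceeds via Gaussian maps on flag varieties, which is a rather different route from your representation-theoretic ansatz.

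As for your proposal itself: the opening move is sound. The vector $w_0 = b\,(f_{\beta^\vee}v_\lambda)\otimes v_\mu - a\,v_\lambda\otimes(f_{\beta^\vee}v_\mu)$ really is killed by $e_{\beta^\vee}$ and by all $e_{\alpha_i^\vee}$ with $\beta^\vee-\alpha_i^\vee\notin\Phi_{G^\vee}\sqcup\{0\}$, and your identification of the remaining obstructions is correct. The gap is exactly where you flag it: the iterated-correction scheme is asserted rather than executed. Two concrete issues you would need to resolve are (i) a correction term such as $f_{\beta^\vee-\alpha_i^\vee}v_\lambda\otimes f_{\alpha_i^\vee}v_\mu$, when hit by $e_{\alpha_j^\vee}$, can produce obstructions involving roots $\gamma^\vee$ for which condition~(2') gives no direct information (e.g.\ $\beta^\vee-\alpha_i^\vee-\alpha_j^\vee$ may be a root while $\beta^\vee-\alpha_j^\vee$ is not, so (2') says nothing about $\langle\alpha_j,\lambda\rangle$), and (ii) in multiply-laced types the root-string combinatorics allows $[e_{\alpha_i^\vee},f_{\gamma^\vee}]$ to land in several places, so the bookkeeping of coefficients does not obviously close up. Your height-decreasing observation guarantees termination of the recursion but not solvability at each step; without an explicit organizing principle for the full system of corrections (and a verification that the resulting linear system is consistent), the argument as written is an outline, not a proof.
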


Existence of irreducible components of this form was originally conjectured by Wahl \cite{W} and proven by him in the case of $G^\vee=SL_{n+1}(\C)$. We remark that Theorem \ref{original} has the following immediate corollary (cf. \cite{W}*{Theorem 6.5}), which is the form of root components that we consider.

\begin{corollary}\label{cor1}
Suppose $G^\vee$ has no component of type $G_2$. Let $N\ge 1$, and suppose that $\beta^\vee \in \Phi_{G^\vee}$, $\lambda, \mu$ as above satisfy
\begin{enumerate}
    \item[(1)] $\lambda+\mu-N\beta^\vee$ is dominant.
    \item[(2)] If $\langle \alpha_i,\lambda\rangle<N$ or $\langle\alpha_i,\mu\rangle < N$, then $\beta^\vee-\alpha_i^\vee \not \in \Phi_{G^\vee} \sqcup \{0\}$. 
\end{enumerate}
Then $V(\lambda+\mu-N\beta^\vee)\subset V(\lambda)\otimes V(\mu)$.
\end{corollary}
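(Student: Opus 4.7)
My plan is to prove Corollary \ref{cor1} by induction on $N$, taking Theorem \ref{original} as the base case. When $N=1$, conditions (1) and (2) of the corollary coincide verbatim with conditions (1') and (2') of the theorem, so $V(\lambda + \mu - \beta^\vee) \subset V(\lambda) \otimes V(\mu)$ is immediate.

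For the inductive step, suppose the corollary is known for $N-1$, and let $(\lambda,\mu,N)$ with $N \ge 2$ satisfy (1) and (2). I would first perform two routine monotonicity checks. Conditions (1) and (2) for $N$ imply conditions (1') and (2') for the triple $(\lambda,\mu,\beta^\vee)$: dominance of $\lambda+\mu-\beta^\vee$ follows from dominance of $\lambda+\mu-N\beta^\vee$ by pairing with each simple root $\alpha_i$ and splitting into cases on the sign of $\langle \alpha_i,\beta^\vee\rangle$, while (2') is a special case of (2) since $\langle \alpha_i,\lambda\rangle = 0$ implies $\langle \alpha_i,\lambda\rangle < N$. Thus Theorem \ref{original} yields $V(\lambda+\mu-\beta^\vee) \subset V(\lambda)\otimes V(\mu)$. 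A parallel check shows that (1) and (2) for $N$ also imply (1) and (2) for $N-1$, so the inductive hypothesis gives $V(\lambda+\mu-(N-1)\beta^\vee) \subset V(\lambda)\otimes V(\mu)$.

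The heart of the argument is then to combine these intermediate containments and peel off one additional factor of $\beta^\vee$. The most natural route is to produce dominant weights $\tilde\lambda, \tilde\mu$ with $\tilde\lambda + \tilde\mu = \lambda+\mu-\beta^\vee$ for which $(\tilde\lambda,\tilde\mu,N-1)$ again satisfies (1) and (2); by the inductive hypothesis, $V(\lambda+\mu-N\beta^\vee) = V(\tilde\lambda+\tilde\mu-(N-1)\beta^\vee)$ would then sit inside $V(\tilde\lambda)\otimes V(\tilde\mu)$, and one would transport this containment into $V(\lambda)\otimes V(\mu)$ via an associativity argument in a three-fold tensor product, or via a direct PRV-type inclusion induced by the reflection $s_\beta$.

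I expect this transport to be the main obstacle, because inclusions of irreducible summands do not promote to inclusions of ambient tensor products in an obvious way, and the choice of splitting $(\tilde\lambda,\tilde\mu)$ interacts delicately with the coroot-theoretic hypothesis $\beta^\vee - \alpha_i^\vee \notin \Phi_{G^\vee}\sqcup\{0\}$. The exclusion of $G_2$ components is expected to enter precisely here, ensuring that hypothesis (2) is stable under the choice of $(\tilde\lambda,\tilde\mu)$; in $G_2$ the asymmetry between long and short roots/coroots disrupts the compatibility that makes the iteration go through.
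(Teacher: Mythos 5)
Your monotonicity checks are sound: conditions (1) and (2) for $N$ do imply (1') and (2') of Theorem \ref{original}, and also (1) and (2) for $N-1$. But you have correctly identified the failure point yourself, and it is fatal: the transport step is not an obstacle to be expected, it is an unfilled gap. An inclusion $V(\lambda+\mu-N\beta^\vee)\subset V(\tilde\lambda)\otimes V(\tilde\mu)$ with $\tilde\lambda+\tilde\mu = \lambda+\mu-\beta^\vee$ does not propagate to an inclusion in $V(\lambda)\otimes V(\mu)$ by associativity or by any obvious reflection trick, because $(\tilde\lambda,\tilde\mu)$ need not be related to $(\lambda,\mu)$ by any tensoring operation. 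Your inductive scheme is set up but the induction never closes.

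The paper avoids induction entirely. It introduces the coweight $\rho_\beta := \sum_{i:\,\beta^\vee - \alpha_i^\vee \in \Phi_{G^\vee}\sqcup\{0\}}\varpi_i$, the minimal dominant coweight satisfying (2'). Hypothesis (2) of the corollary says exactly that $\lambda - N\rho_\beta$ and $\mu - N\rho_\beta$ are dominant, giving a splitting $\lambda = N\rho_\beta + \lambda'$, $\mu = N\rho_\beta + \mu'$. Theorem \ref{original} applied to $(\rho_\beta,\rho_\beta,\beta^\vee)$ gives $V(2\rho_\beta - \beta^\vee)\subset V(\rho_\beta)\otimes V(\rho_\beta)$ (this is where non-$G_2$ enters: one needs $2\rho_\beta - \beta^\vee$ dominant, i.e.\ $\langle\alpha_i,\beta^\vee\rangle\le 2$ whenever $\beta^\vee-\alpha_i^\vee\in\Phi_{G^\vee}\sqcup\{0\}$, which holds precisely because $G^\vee$ is at most doubly-laced). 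Then one invokes \emph{additivity of the tensor cone}: if $V(\sigma)\subset V(\sigma_1)\otimes V(\sigma_2)$ and $V(\tau)\subset V(\tau_1)\otimes V(\tau_2)$ then $V(\sigma+\tau)\subset V(\sigma_1+\tau_1)\otimes V(\sigma_2+\tau_2)$. Applying this $N$ times to the $\rho_\beta$-inclusion and once more with the Cartan component $V(\lambda'+\mu')\subset V(\lambda')\otimes V(\mu')$ yields the result in one stroke. Notice that this additivity lemma is precisely the ``promotion'' mechanism your sketch is missing; even if you tried to repair your induction, you would most likely need it, at which point the direct argument is simpler. Your stated reason for excluding $G_2$ (asymmetry disrupting the choice of $(\tilde\lambda,\tilde\mu)$) also does not match the actual role of the hypothesis, which is purely to ensure dominance of $2\rho_\beta - \beta^\vee$.
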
 

\begin{proof}
Let $\rho_\beta$ be the dominant coweight of $T$ given by 
$$
\rho_\beta:= \sum_{i: \beta^\vee-\alpha_i^\vee\in \Phi_{G^\vee} \sqcup \{0\}} \varpi_i.
$$
That is, $\rho_\beta$ is the minimal dominant coweight satisfying condition (2'). 
By assumption (2), $\lambda = N\rho_\beta+\lambda'$ and $\mu = N\rho_\beta+\mu'$ for suitable \emph{dominant} coweights $\lambda',\mu'$. Since $G^\vee$ is at most doubly-laced, $2\rho_\beta- \beta^\vee$ is dominant. By Theorem \ref{original}, 
$$
V(2\rho_\beta - \beta^\vee)\subset V(\rho_\beta)\otimes V(\rho_\beta);
$$
therefore by scaling,
$
V(2N\rho_\beta - N\beta^\vee)\subset V(N\rho_\beta)\otimes V(N\rho_\beta).
$
Finally, we always have $V(\lambda'+\mu')\subseteq V(\lambda')\otimes V(\mu')$. By additivity of tensor product decompositions, 
$$
V(2N\rho_\beta-N\beta^\vee + \lambda'+\mu')\subset V(\lambda'+N\rho_\beta)\otimes V(\mu'+N\rho_\beta),
$$
as desired. 
\end{proof}

Returning now to cyclic convolution varieties, it is known via the geometric Satake equivalence (\cite{L}, \cite{G}, \cite{BD}, \cite{MV}) that the number of irreducible components of $\op{Gr}_{G,c(\vec\lambda)}$ which attain the maximal dimension $\langle \rho, \sum \lambda_i\rangle$ is equal to
$$
\dim (V(\lambda_1)\otimes\cdots\otimes V(\lambda_s))^{G^\vee};
$$
see also \cite{H}*{Proposition 3.1}. In fact, these irreducible components give a canonical basis of the latter vector space, and therefore irreducible components of maximal possible dimension of $\op{Gr}_{G,c(\vec\lambda)}$ are associated to components $V(-w_0(\lambda_s))$ of $V(\lambda_1) \otimes \cdots \otimes V(\lambda_{s-1})$. 

As a consequence of the root components of Kumar, we know that for dominant weights $\lambda$, $\mu$ and positive root $\beta^\vee$ of $G^\vee$ satisfying the conditions of Corollary \ref{cor1}, there must be irreducible components of $\op{Gr}_{G,c(\lambda, \mu, \nu)}$ of dimension $\langle \rho, \lambda+\mu+\nu \rangle$, where $\nu=-w_0(\lambda+\mu-N\beta^\vee)$. The content of Theorem \ref{geom}, and the three following sections, is an explicit construction of such components in the case of $G=PGL_{n+1}(\C)$.

\begin{example}
At this point we will pick up a running example for concreteness. On the representation-theoretic side, we work with $G^\vee=SL_5(\mathbb{C})$. We take $\beta=\alpha_2+\alpha_3$ a positive root of $G=PGL_5(\C)$, and take $\lambda=\varpi_2+\varpi_3$ and $\mu=\varpi_1+\varpi_2+\varpi_3+\varpi_4$. Lastly let us take $N=1$.  Then $\lambda+\mu-\beta^\vee=2\varpi_1+\varpi_2+\varpi_3+2\varpi_4$ is dominant. Moreover, $\langle \alpha_1, \lambda \rangle=\langle \alpha_4, \lambda \rangle=0$, and we see that neither $\beta^\vee-\alpha_1^\vee$ nor $\beta^\vee-\alpha_4^\vee$ is an element of $\Phi_{G^\vee}$, satisfying condition (2) of Theorem \ref{geom}. Condition (2) holds trivially for $\mu$. 
\end{example}

\section{A Good Point In $\cyc$}
Let $\lambda, \mu, \beta, $ and $\nu=-w_0(\lambda+\mu-N\beta^\vee)$ be as in the hypothesis of Theorem \ref{geom}. In this section we consider a point of $(\op{Gr}_G)^3 $ and prove that it is contained in the variety $\cyc$. 

By way of motivation, recall the well-known identity
\begin{align}\label{ide}
t^{-N\beta^\vee}=x_\beta(-t^{-N})x_{-\beta}(t^N)s_\beta^{-1}x_{-\beta}(t^{-N})
\end{align}
valid for $G$ of any type with a choice of pinning. 
A reasonable approach to finding points inside $\cyc$, then, is to examine points of the form
$$
([\lambda], g[\lambda+\mu-N\beta^\vee],[0])
$$
for some choice of $g\in G(\mathcal{O})$ making use of the expression (\ref{ide}). Clearly, $d([0],[\lambda]) = \lambda$. Moreover by acting on the left by $t^{-\lambda-\mu+N\beta^{\vee}}g^{-1} \in G(\mathcal{K})$, we see $d(g[\lambda +\mu-N \beta^{\vee}],[0])=d([0],[-\lambda-\mu+N\beta^{\vee}])$. This last point is not given by a dominant weight, but its dominant translate is precisely $-w_0(\lambda+\mu-N\beta^{\vee})=\nu$, so $d(g[\lambda+\mu-N\beta^\vee],[0])=\nu$. It remains to compute  $d([\lambda],g[\lambda+\mu-N\beta^{\vee}])=d([0],t^{-\lambda}gt^{\lambda}t^{-N\beta^{\vee}}[\mu])$.
\bigskip

Before proceeding, we record some general identities which will be of use in the following computation. 
Let $(g_1g_2)=g_1g_2g_1^{-1}g_2^{-1}$ be the commutator. We have the following lemma: 
\begin{lemma}\label{sberg} (\cite{Stein}, Chapter 3)
For arbitrary roots $\gamma_1, \gamma_2$ such that $\gamma_1+\gamma_2 \neq 0$, we have $(x_{\gamma_1}(a)x_{\gamma_2}(b))=\prod_{i \gamma_1+j \gamma_2 \in \Phi} x_{i\gamma_1+j\gamma_2}(c_{ij}a^ib^j)$ for $i, j \in \mathbb{Z}^+$ and constants $c_{ij}$ which depend on $\gamma_1$ and $\gamma_2$.
\end{lemma}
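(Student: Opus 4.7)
The plan is to follow Steinberg's strategy: reduce the identity to a computation inside the rank-two subgroup attached to $\gamma_1,\gamma_2$, and then combine a height filtration with the $T$-action to pin down the shape of the expansion. First I would work inside the closed subgroup $H\subseteq G$ generated by the root subgroups $x_\gamma(\mathbb{G}_a)$ for $\gamma$ in the rank-$\le 2$ subsystem $\Psi:=\Phi\cap(\mathbb{Q}\gamma_1+\mathbb{Q}\gamma_2)$. Since $\gamma_1+\gamma_2\neq 0$ (and I may assume $\gamma_1,\gamma_2$ are linearly independent, as otherwise the commutator is trivially $1$), I can choose a positive system $\Psi^+\subset\Psi$ containing both $\gamma_1$ and $\gamma_2$. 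The commutator $(x_{\gamma_1}(a)x_{\gamma_2}(b))$ then lies in the connected unipotent subgroup $U^+_\Psi\subseteq H$ generated by $\{x_\alpha:\alpha\in\Psi^+\}$, which already forces the factors on the right to be indexed by positive combinations $i\gamma_1+j\gamma_2\in\Psi^+$ with $i,j\ge 1$.

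Next I would exploit the height filtration on $U^+_\Psi$. Writing $U_{\ge k}$ for the (normal) subgroup generated by $x_\alpha(\mathbb{G}_a)$ with $\alpha=i\gamma_1+j\gamma_2\in\Psi^+$ and $i+j\ge k$, the successive quotients $U_{\ge k}/U_{\ge k+1}$ are abelian, with the root subgroups at height exactly $k$ generating them freely. A direct Baker--Campbell--Hausdorff computation in the Lie algebra gives
$$
(x_{\gamma_1}(a)x_{\gamma_2}(b))\;\equiv\; x_{\gamma_1+\gamma_2}(N_{\gamma_1,\gamma_2}\,ab)\pmod{U_{\ge 3}},
$$
where $N_{\gamma_1,\gamma_2}$ is the Chevalley structure constant from $[e_{\gamma_1},e_{\gamma_2}]=N_{\gamma_1,\gamma_2}\,e_{\gamma_1+\gamma_2}$. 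Iterating up the filtration and fixing, once and for all, an ordering of the relevant roots by height produces an expansion
$$
(x_{\gamma_1}(a)x_{\gamma_2}(b))\;=\;\prod_{i\gamma_1+j\gamma_2\in\Psi^+} x_{i\gamma_1+j\gamma_2}(f_{ij}(a,b))
$$
for some polynomial functions $f_{ij}(a,b)$.

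To pin down the shape of $f_{ij}$, I would invoke $T$-equivariance: for $\lambda\in X_*(T)$ and $s\in\mathbb{G}_m$, conjugation by $\lambda(s)$ sends $x_\gamma(c)\mapsto x_\gamma(s^{\langle\gamma,\lambda\rangle}c)$. Applying this to both sides of the commutator identity and choosing $\lambda$ generically (so that the exponents $i\langle\gamma_1,\lambda\rangle+j\langle\gamma_2,\lambda\rangle$ separate the finitely many relevant pairs $(i,j)$) forces each $f_{ij}$ to be homogeneous of bidegree $(i,j)$ in $(a,b)$. Being polynomial, this means $f_{ij}(a,b)=c_{ij}\,a^i b^j$ for constants $c_{ij}$ depending only on $\gamma_1,\gamma_2$ and the pinning, with $c_{1,1}=N_{\gamma_1,\gamma_2}$ coming out of the BCH step above.

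The main obstacle is nailing down the remaining constants $c_{ij}$ for $i+j\ge 3$ and the exact set of pairs contributing, which depends on the type of $\Psi$: by the classification of rank-$\le 2$ root systems, $\Psi$ is one of $A_1\times A_1$ (trivial product, since orthogonal root subgroups commute), $A_2$ (only the pair $(1,1)$), $B_2$, or $G_2$. Steinberg verifies each in an explicit matrix model ($SL_3$, $Sp_4$, and a seven-dimensional realization of $G_2$), and care is required because the $c_{ij}$ depend sensitively on both the pinning and the chosen ordering of the factors. Since the present paper works entirely inside type $A$, where the formula collapses to $(x_{\gamma_1}(a)x_{\gamma_2}(b))=x_{\gamma_1+\gamma_2}(\pm ab)$ when $\gamma_1+\gamma_2\in\Phi$ and is trivial otherwise, a full re-derivation of the $B_2,G_2$ constants would be overkill for the applications, and I would simply cite Steinberg for the exhaustive treatment.
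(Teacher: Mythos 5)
The paper offers no proof of this lemma at all---it is stated as a citation to Steinberg's \emph{Lectures on Chevalley Groups}, Chapter 3---so there is no ``paper's own proof'' to compare against. Your sketch is a faithful summary of how that cited argument goes: restrict to the rank-$\le 2$ subsystem $\Psi = \Phi \cap (\mathbb{Q}\gamma_1 + \mathbb{Q}\gamma_2)$, observe the commutator lands in the unipotent group on a positive system of $\Psi$ containing $\gamma_1,\gamma_2$, filter that unipotent group by the height $i+j$, use the Lie-algebra computation to handle the first nontrivial graded piece, use the $T$-action (equivalently, the $\mathbb{Z}$-grading) to force each coefficient polynomial to be a monomial $c_{ij}a^ib^j$, and finally appeal to the rank-two classification to enumerate the contributing $(i,j)$ and to compute the constants explicitly. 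Two small points worth tightening if you were to write this out fully: first, the claim that the $f_{ij}$ exist as polynomials rests on the fact that multiplication $\prod_{\alpha\in\Psi^+} x_\alpha(\mathbb{G}_a) \to U^+_\Psi$ is an isomorphism of varieties for any fixed ordering, which you should invoke explicitly before speaking of ``uniqueness of the decomposition''; second, the constants $c_{ij}$ depend not only on $\gamma_1,\gamma_2$ and the pinning but also on the chosen ordering of the factors (you flag this for $B_2,G_2$ but it deserves emphasis, since the lemma as stated in the paper is silent on it). You are also right that for the applications in this paper only the type $A$ collapse $(x_{\gamma_1}(a)x_{\gamma_2}(b)) = x_{\gamma_1+\gamma_2}(\pm ab)$ is used, so a full re-derivation of the $B_2,G_2$ constants would be unnecessary; the paper indeed says exactly this in the sentence following the lemma.
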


Since we will be working in Type $A$, root strings are of length at most two, the $c_{ij}$ are $\pm 1$, and we will rewrite this lemma as the commutator rule

\[x_{\gamma_2}(a)x_{\gamma_1}(b)=x_{\gamma_1}(b)x_{\gamma_2}(a)x_{\gamma_1+\gamma_2}(\pm ab)\]
 provided $\gamma_1 + \gamma_2$ is a root; otherwise, $x_{\gamma_1}(a)$ and $x_{\gamma_2}(b)$ commute. 

Moreover we have the following simple identity:

\begin{lemma}\label{simpity}
	$t^{\lambda}x_{\alpha}(a)t^{-\lambda}=x_{\alpha}(at^{\langle \alpha, \lambda \rangle})$.
\end{lemma}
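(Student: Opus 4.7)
The plan is to reduce the identity to the defining behavior of the root one-parameter subgroup $x_\alpha$ under $T$-conjugation. As part of the pinning data, the morphism $x_\alpha:\mathbb{G}_a\to G$ is characterized (up to scalar, once a pinning is fixed) by the property that for any $h\in T$,
$$
h\, x_\alpha(a)\, h^{-1}=x_\alpha\bigl(\alpha(h)\,a\bigr),
$$
where $\alpha(h)\in\mathbb{G}_m$ is the evaluation of the root character. Since this is an identity of group schemes over $\mathbb{Z}$ (or $\mathbb{C}$), it persists after base change to $\mathcal{K}$, and therefore remains valid when $h$ is taken to be an $\mathcal{K}$-point of $T$.

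Next I would specialize to $h=t^\lambda\in T(\mathcal{K})$. The coweight $\lambda\in X_*(T)=\op{Hom}(\mathbb{G}_m,T)$ and the root $\alpha\in X^*(T)=\op{Hom}(T,\mathbb{G}_m)$ compose to a cocharacter $\alpha\circ\lambda\in\op{Hom}(\mathbb{G}_m,\mathbb{G}_m)\cong\mathbb{Z}$, and by construction of the natural pairing this integer is precisely $\langle\alpha,\lambda\rangle$. Evaluating on the tautological parameter $t\in\mathcal{K}^\times$ yields $\alpha(t^\lambda)=t^{\langle\alpha,\lambda\rangle}$.

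Substituting this into the torus-conjugation formula above gives
$$
t^\lambda\, x_\alpha(a)\, t^{-\lambda}=x_\alpha\bigl(t^{\langle\alpha,\lambda\rangle}\,a\bigr),
$$
which is the desired identity. The only step that requires any care is verifying $\alpha(t^\lambda)=t^{\langle\alpha,\lambda\rangle}$, but this is essentially the definition of the perfect pairing $X^*(T)\times X_*(T)\to\mathbb{Z}$, so there is no substantive obstacle; the lemma is a routine unwinding of the definitions of the pinning and of the cocharacter lattice.
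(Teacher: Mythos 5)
Your proof is correct and is the standard argument: the torus conjugation formula $h\,x_\alpha(a)\,h^{-1}=x_\alpha(\alpha(h)a)$ combined with the observation that $\alpha(t^\lambda)=t^{\langle\alpha,\lambda\rangle}$, which is exactly the definition of the pairing between characters and cocharacters. The paper states this lemma without proof, treating it as a well-known identity, so there is nothing to compare against; your write-up is a clean unwinding of the definitions and fills the gap correctly.
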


We return to the examination of relative position of $([\lambda], g[\lambda+\mu-N\beta^{\vee}])$, which is the same as the relative position of $([0],t^{-\lambda}gt^{\lambda}t^{-N \beta^{\vee}}[\mu])$. The relative position will be $\mu$ so long as $t^{-\lambda} g t^{\lambda} t^{-N \beta^{\vee}}[\mu]$ is the same coset as  $g'[\mu]$ for some $g' \in G(\mathcal{O})$. We make a (semi-)judicious choice; let $g=x_{\beta}(t^{\langle \beta, \lambda \rangle -N})$, and moreover make the substitution (\ref{ide}) for $t^{-N\beta}$. We then have 

\begin{align*}
t^{-\lambda}gt^{\lambda}t^{-N\beta}[\mu] & = t^{-\lambda}gt^\lambda \left( x_\beta(-t^{-N})x_{-\beta}(t^N)s_\beta^{-1}x_{-\beta}(t^{-N}) \right) [\mu] \\
&= x_{\beta}(t^{-N})x_\beta(-t^{-N})x_{-\beta}(t^N)s_\beta^{-1}x_{-\beta}(t^{-N})[\mu] \\
&=x_{-\beta}(t^N)s_{\beta}^{-1} x_{-\beta}(t^{-N})[\mu].\\
\end{align*}

Lastly, note that $x_{-\beta}(t^{-N})t^{\mu}=t^{\mu}x_{-\beta}(t^{-N+\langle \beta, \mu \rangle})$. By condition (2) of Theorem \ref{geom} on $\lambda, \mu$ and $\beta^{\vee}$, $x_{-\beta}(t^{-N+\langle \beta, \mu \rangle}) \in G(\mathcal{O})$. Thus finally we obtain 

\begin{align*}
	x_{-\beta}(t^N)s_{\beta}^{-1}x_{-\beta}(t^-N)[\mu]=x_{-\beta}(t^N)s_{\beta}^{-1}[\mu].
\end{align*}

So we have that $d([\lambda], x_{\beta}(t^{\langle \beta, \lambda \rangle -N})[\lambda+\mu-N\beta^{\vee}])=d([0],x_{-\beta}(t^N)s_{\beta}^{-1}[\mu])$. Since $x_{-\beta}(t^N)s_{\beta}^{-1} \in G(\mathcal{O})$, we see that we have indeed produced a point in $\cyc$.

\begin{remark} Note that we have made use of both conditions $\langle \beta, \lambda \rangle -N \geq 0$ and $\langle \beta, \mu \rangle -N \geq 0;$ the first to ensure that if $t^{-\lambda}(g)t^\lambda=x_{\beta}(t^{-N})$ then $g \in G(\mathcal{O})$, and the second to note that $x_{-\beta}(t^{-N})[\mu]=[\mu]$. However, note that these are weaker conditions than condition (2) if $\beta$ is not simple. This may explain why the $G(\mathcal{O})$-orbit of this point is of insufficient dimension. 
\end{remark}

 Unfortunately, even in type $A$, if $\beta$ is not simple, then the $G(\mathcal{O})$-orbit of this point is not sufficient to produce a cycle of the correct  dimension in $\cyc$. Therefore we must modify our point. \\

We proceed now to name a ``good'' (i.e., its orbit dimension will be maximal) point in $\cyc$ for the case of $G=PGL_{n+1}(\C)$. As before, let $\{\alpha_1,\hdots,\alpha_n\}$ denote the standard choice of simple roots for $G$.  Then $\beta = \alpha_p + \alpha_{p+1} + \hdots +\alpha_q$ for some integers $1\le p\le q\le n$. We introduce the following type A specific notation: $\alpha_{i,j}=\alpha_i+\alpha_{i+1} + \dots + \alpha_j$. In type A, all positive roots are precisely of this form. 
Set
\begin{align}\label{xXxXx}
x = \prod_{i=p}^q x_{\alpha_{p,i}}(t^{\langle \alpha_{p,i},\lambda\rangle-N});
\end{align}
note that this product is independent of order of multiplication. Then $x$ is the unipotent matrix with $1$s on the diagonal and $t^{\langle \alpha_{p,i},\lambda\rangle-N}$ in the $(p,i+1)$ entry for each $p\le i\le q$ ($0$s elsewhere). Note that $\langle \alpha_{p,i},\lambda\rangle -N \ge \langle \alpha_p,\lambda \rangle-N\ge 0$ by condition (2) of Theorem \ref{geom}, so that indeed $x \in G(\mathcal{O})$.

\begin{prop}\label{point}
	The point $\xi = ([\lambda],x[\lambda+\mu-N\beta^\vee],[0])$ belongs to $\cyc$.
\end{prop}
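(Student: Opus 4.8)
The plan is to adapt the computation already carried out for the single-root-vector point $g = x_\beta(t^{\langle\beta,\lambda\rangle-N})$ to the new element $x$ in (\ref{xXxXx}). As before, three relative positions must be checked: $d([0],[\lambda]) = \lambda$, which is immediate; $d(x[\lambda+\mu-N\beta^\vee],[0]) = \nu$, which follows exactly as in the motivating discussion since $x \in G(\mathcal{O})$ means acting on the left by $t^{-\lambda-\mu+N\beta^\vee}x^{-1} \in G(\mathcal{K})$ shows this relative position equals the dominant translate of $-\lambda-\mu+N\beta^\vee$, namely $\nu$; and the real work is to show $d([\lambda], x[\lambda+\mu-N\beta^\vee]) = \mu$, equivalently that $t^{-\lambda} x t^\lambda t^{-N\beta^\vee}[\mu] = g'[\mu]$ for some $g' \in G(\mathcal{O})$.

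For the third relative position I would first use Lemma \ref{simpity} to compute $t^{-\lambda} x t^\lambda = \prod_{i=p}^q x_{\alpha_{p,i}}(t^{-N})$, since conjugating $x_{\alpha_{p,i}}(t^{\langle\alpha_{p,i},\lambda\rangle-N})$ by $t^{-\lambda}$ shifts the exponent down by $\langle\alpha_{p,i},\lambda\rangle$. Then substitute the identity (\ref{ide}) for $t^{-N\beta^\vee}$, giving
$$
t^{-\lambda}xt^\lambda t^{-N\beta^\vee}[\mu] = \left(\prod_{i=p}^q x_{\alpha_{p,i}}(t^{-N})\right) x_\beta(-t^{-N}) x_{-\beta}(t^N) s_\beta^{-1} x_{-\beta}(t^{-N})[\mu].
$$
Since $\beta = \alpha_{p,q}$, the factor $x_{\alpha_{p,q}}(t^{-N})$ from the product cancels the $x_\beta(-t^{-N})$ (both are root vectors for the same root and commute, exponents negatives of each other). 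What remains is $\left(\prod_{i=p}^{q-1} x_{\alpha_{p,i}}(t^{-N})\right) x_{-\beta}(t^N) s_\beta^{-1} x_{-\beta}(t^{-N})[\mu]$. Next, $x_{-\beta}(t^{-N})$ moves past $t^\mu$ to become $x_{-\beta}(t^{-N+\langle\beta,\mu\rangle})$, which lies in $G(\mathcal{O})$ by condition (2) (which forces $\langle\beta,\mu\rangle \ge \langle\alpha_p,\mu\rangle \ge N$), so it acts trivially on $[\mu]$ and drops out. So the coset equals $\left(\prod_{i=p}^{q-1} x_{\alpha_{p,i}}(t^{-N})\right) x_{-\beta}(t^N) s_\beta^{-1}[\mu]$. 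Now I must show the remaining element lies in $G(\mathcal{O})$; the factors $x_{\alpha_{p,i}}(t^{-N})$ and $s_\beta^{-1}$ are manifestly in $G(\mathcal{O})$ (the latter because $s_\beta^{-1} = x_\beta(1)x_{-\beta}(-1)x_\beta(1)$ up to sign, all with constant entries), as is $x_{-\beta}(t^N)$, so in fact the whole remaining element is already in $G(\mathcal{O})$ and we are done.

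Wait — I should double-check the ordering issue: the $x_{\alpha_{p,i}}(t^{-N})$ for $i < q$ have positive powers of $t$ in every entry, so they are automatically in $G(\mathcal{O})$, and it does not matter that they sit to the left of the $s_\beta^{-1}$ and $x_{-\beta}(t^N)$ terms — the entire product is a finite product of elements of $G(\mathcal{O})$, hence in $G(\mathcal{O})$, hence fixes $[0]$ but here acts on $[\mu]$; that is fine, we only need the product to be in $G(\mathcal{O})$ to conclude $d([0], (\text{product})[\mu]) = \mu$. So the main obstacle is really just the bookkeeping of the cancellation $x_{\alpha_{p,q}}(t^{-N}) \cdot x_\beta(-t^{-N}) = 1$ and verifying condition (2) gives exactly the two inequalities $\langle\alpha_{p,i},\lambda\rangle \ge N$ and $\langle\beta,\mu\rangle \ge N$ needed; there are no genuinely hard estimates, only careful tracking of the pinning identities via Lemmas \ref{sberg}, \ref{simpity}, and the commutator rule. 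I would present the computation as a short displayed chain of equalities mirroring the one already in the text, with a sentence explaining each cancellation.
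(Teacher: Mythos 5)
There is a genuine gap at the final step. After the cancellation of $x_{\alpha_{p,q}}(t^{-N})$ against $x_\beta(-t^{-N})$ and the absorption of $x_{-\beta}(t^{-N})$ into $[\mu]$, the coset you are left with is
$\bigl(\prod_{i=p}^{q-1} x_{\alpha_{p,i}}(t^{-N})\bigr)\,x_{-\beta}(t^N)\,s_\beta^{-1}[\mu]$,
and you claim the prefactor is ``manifestly in $G(\mathcal{O})$'' because the $x_{\alpha_{p,i}}(t^{-N})$ ``have positive powers of $t$ in every entry.'' That is false: the argument of each of these root subgroups is $t^{-N}$ with $N\ge 1$, so each $x_{\alpha_{p,i}}(t^{-N})$ for $p\le i\le q-1$ has an entry with a pole at $t=0$ and does \emph{not} lie in $G(\mathcal{O})$ (only $x_{-\beta}(t^N)$ and $s_\beta^{-1}$ do). If these factors were honestly in $G(\mathcal{O})$ the entire construction would be trivial and the single-root-vector point $g=x_\beta(t^{\langle\beta,\lambda\rangle-N})$ would already have sufficed; the whole point of taking the longer product $x$ is that these extra negative-valuation factors must be disposed of by a further argument, not by inspection.

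The missing idea, which is exactly how the paper finishes, is to handle these factors by moving them through the remaining group elements. First use the commutator rule (Lemma \ref{sberg}) to push $x_{-\beta}(t^N)$ to the left past $\prod_{i=p}^{q-1}x_{\alpha_{p,i}}(t^{-N})$; since $\alpha_{p,i}-\beta=-\alpha_{i+1,q}$ is a root, this produces correction factors $x_{-\alpha_{i+1,q}}(-1)$, which (together with $x_{-\beta}(t^N)$) lie in $G(\mathcal{O})$ and can be discarded to the left. You are then reduced to $y[\mu]$ with $y=\prod_{i=p}^{q-1}x_{\alpha_{p,i}}(t^{-N})\,s_\beta^{-1}$. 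Conjugating through $s_\beta^{-1}$ using $s_\beta\alpha_{p,i}=-\alpha_{i+1,q}$ turns this into $s_\beta^{-1}\prod_{i=p}^{q-1}x_{-\alpha_{i+1,q}}(t^{-N})[\mu]$, and conjugating by $t^{-\mu}$ shows each factor becomes $x_{-\alpha_{i+1,q}}(t^{\langle\alpha_{i+1,q},\mu\rangle-N})$, which lies in $G(\mathcal{O})$ because condition (2) applied to $\alpha_q$ (note $\beta^\vee-\alpha_q^\vee\in\Phi_{G^\vee}\sqcup\{0\}$) forces $\langle\alpha_{i+1,q},\mu\rangle\ge\langle\alpha_q,\mu\rangle\ge N$. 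Hence $y[\mu]=s_\beta^{-1}[\mu]$ and $d([0],y[\mu])=\mu$. Without this commutation-and-reflection step your argument does not establish the third relative position; everything before that point (the two easy distances, the conjugation $t^{-\lambda}xt^\lambda$, the cancellation at $i=q$, and the removal of $x_{-\beta}(t^{-N})$ via $\langle\beta,\mu\rangle\ge N$) agrees with the paper and is fine.
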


\begin{proof}
	The computation uses the same ideas as those above.
	It is clear that $d([0],[\lambda]) = \lambda$ and $d(x[\lambda+\mu-N\beta^\vee],[0]) = \nu$, so it suffices to establish that $d([\lambda],x[\lambda+\mu-N\beta^\vee]) = \mu$. Rewriting slightly, we arrive at: \[d([\lambda],x[\lambda+\mu-N\beta^\vee])=d([0], t^{-\lambda}xt^{\lambda}t^{-N \beta^{\vee}}[\mu]).\]
	
	By Lemma \ref{simpity}, the expression $t^{-\lambda}xt^{\lambda}$ is equal to
	$$
	x' = \prod_{i=p}^q x_{\alpha_{p,i}}(t^{-N});
	$$
	leading us to examine 
	
	\[d([0], x't^{-N \beta^{\vee}}[\mu]).\]
	
	By identity (\ref{ide}),
	\begin{align*}
	x't^{-N\beta^\vee} [\mu] &= \left(\prod_{i=p}^{q}x_{\alpha_{p,i}}(t^{-N})\right)x_\beta(-t^{-N})x_{-\beta}(t^N)s_\beta^{-1}x_{-\beta}(t^{-N})[\mu] & ~ \\
	&= \left(\prod_{i=p}^{q-1}x_{\alpha_{p,i}}(t^{-N})\right)x_{-\beta}(t^N)s_\beta^{-1}x_{-\beta}(t^{-N})[\mu] & \textrm{since $\alpha_{p,q}=\beta$} \\
	&= \left(\prod_{i=p}^{q-1}x_{\alpha_{p,i}}(t^{-N})\right)x_{-\beta}(t^N)s_\beta^{-1}[\mu] & \textrm{since $x_{-\beta}(t^{-N})[\mu]=[\mu]$}. \\
	\end{align*}
	
	For $p \leq i \leq q$, observe that  \[\alpha_{p,i}+(-\beta)=(\alpha_p+ \dots +\alpha_{i})-(\alpha_p+ \dots \alpha_i + \dots +\alpha_q)=-\alpha_{i+1,q}\] is a root, so $x_{\alpha_{p,i}}(t^{-N})x_{-\beta}(t^N) = x_{-\beta}(t^N)x_{\alpha_{p,i}}(t^{-N})x_{-\alpha_{i+1,q}}(-1)$ (using Lemma \ref{sberg}). Furthermore, 
	the root subgroups 
	$x_{-\alpha_{i+1,q}}(\cdot)$ commute with $x_{\alpha_{p,j}}(\cdot)$ for any $i,j\le q-1$ since $-\alpha_{i+1,q}+\alpha_{p,j}$ is never a root under these conditions.
	
	Thus, moving $x_{-\beta}(t^N)$ all the way to the left, we obtain 
	
	\begin{align*} x' t^{-N\beta^{\vee}} &=
	\left(\prod_{i=p}^{q-1}x_{\alpha_{p,i}}(t^{-N})\right)x_{-\beta}(t^N)s_\beta^{-1}[\mu] \\ &= \underbrace{x_{-\beta}(t^N) \prod_{i=p}^{q-1} x_{-\alpha_{i+1,q}}(-1)}_{\text{ in $G(\mathcal{O})$}}\underbrace{\prod_{i=p}^{q-1} x_{\alpha_{p,i}}(t^{-N}) s_{\beta}^{-1}}_{y:=}[\mu],
	\end{align*}

	and we are reduced to showing $d([0],y[\mu]) = \mu$, with $y$ as indicated. One verifies that $s_\beta \alpha_{p,i} = \alpha_{p,i}-\beta = -\alpha_{i+1,q}$, so 
	$$
	y[\mu] = s_\beta^{-1} \prod_{i=p}^{q-1} x_{-\alpha_{i+1,q}}(t^{-N})[\mu].
	$$
	
	Remark that after conjugating by $t^{-\mu}$ we have  \[t^{-\mu}x_{-\alpha_{i+1,q}}(t^{-N})t^{\mu}=x_{-\alpha_{i+1,q}}(t^{\langle \alpha_{i+1,q}, \mu \rangle-N}).\]
	
	Finally we make use of the second condition placed on root components (2): If $\langle \alpha_i,\lambda\rangle<N$ or $\langle\alpha_i,\mu\rangle < N$, then $\beta^\vee-\alpha_i^\vee \not \in \Phi_{G^\vee} \sqcup \{0\}$.
	Since $\beta^\vee-\alpha^\vee_{q}$ is a root or zero, we must have $\langle \alpha_{i+1,q}, \mu \rangle \geq \langle \alpha_q, \mu \rangle \geq N$. This implies that the coset \[y[\mu] = s_\beta^{-1} \prod_{i=p}^{q-1} x_{-\alpha_{i+1,q}}(t^{-N})[\mu]=s_{\beta}^{-1}[\mu],\] and so finally \[d([0],y[\mu])=\mu\] as desired.
	
\end{proof}

\begin{example} In our running example, we have  $$\xi=([\varpi_2+\varpi_3],x[2\varpi_1+\varpi_2+\varpi_3+2\varpi_4],[0]),$$ where 
\[x=x_{\alpha_2}(t^{1-1})x_{\alpha_2+\alpha_3}(t^{2-1})=\begin{pmatrix}1 & 0 & 0 & 0 & 0 \\ 0 & 1 & t^0 & t^1 & 0 \\ 0 & 0 & 1 & 0  & 0 \\ 0 & 0 & 0 &1 & 0 \\ 0 & 0 & 0 & 0 & 1\\ \end{pmatrix}.\]
\end{example}

\section{The dimension of $G(\mathcal{O})\xi$}
In this section we will verify that the orbit $G(\mathcal{O})\xi\subseteq \cyc$ has dimension equal to $\langle \rho,2\lambda+2\mu-N\beta^\vee\rangle$. We observe that $G(\mathcal{O})\xi \simeq G(\mathcal{O})/\op{Stab}(\xi)$ as varieties. The subgroup $\op{Stab}(\xi)$ is the intersection of the stabilizers of the three elements comprising $\xi$: $\op{Stab}_{G(\mathcal{O})}([\lambda]) \cap \op{Stab}_{G(\mathcal{O})}(x[\lambda+\mu-N \beta^{\vee}]) \cap \op{Stab}_{G(\mathcal{O})}([0])$. Since we are interested in computing the dimension, we may pass to the tangent space of $G(\mathcal{O})/\op{Stab}(\xi)$ at the identity. Thus we wish to investigate the dimension of  the vector space
$$
\mathcal{T}=\dfrac{\mathfrak{g}(\mathcal{O})}
{\op{Ad}_{t^\lambda} \mathfrak{g}(\mathcal{O})\cap \op{Ad}_{x t^{\lambda +\mu -N\beta^\vee}}\mathfrak{g}(\mathcal{O}) \cap\mathfrak{g}(\mathcal{O}) }.
$$

Set $V:= \mathfrak{g}(\mathcal{O})\cap \op{Ad}_{t^\lambda} \mathfrak{g}(\mathcal{O})$ and $W:={\mathfrak{g}(\mathcal{O})\cap \op{Ad}_{t^\lambda} \mathfrak{g}(\mathcal{O})\cap \op{Ad}_{x t^{\lambda +\mu -N\beta^\vee}}\mathfrak{g}(\mathcal{O})}$. It is well-known that $\dim \mathfrak{g}(\mathcal{O})/V = \langle 2\rho, \lambda\rangle$, so from the short exact sequence
$$
0\to V/W \to \mathcal{T} \to \mathfrak{g}(\mathcal{O})/V \to 0
$$
of vector spaces, we see it is sufficient to verify that $\dim V/W = \langle \rho, 2\mu-N\beta^\vee\rangle$. The remainder of this section is devoted to this proof. 

\begin{prop}\label{dim}
	$\dim V/W = \langle 2\rho, \mu\rangle - N\langle \rho, \beta^\vee\rangle$.
\end{prop}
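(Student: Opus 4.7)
The plan is to prove $\dim V/W = \langle 2\rho,\mu\rangle - N\langle\rho,\beta^\vee\rangle$ by establishing matching upper and lower bounds. The upper bound comes essentially for free from the geometry of $\op{Gr}_{G,c(\lambda, \mu, \nu)}$, while the lower bound requires an explicit construction.

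For the upper bound, I would observe that $\overline{G(\mathcal{O}).\xi}$ is an irreducible subvariety of $\op{Gr}_{G,c(\lambda, \mu, \nu)}$, so its dimension is at most $\langle\rho,\lambda+\mu+\nu\rangle = \langle 2\rho,\lambda+\mu\rangle - N\langle\rho,\beta^\vee\rangle$ by the general dimension bound on cyclic convolution varieties. Combining this with the short exact sequence displayed above and $\dim\mathfrak{g}(\mathcal{O})/V = \langle 2\rho,\lambda\rangle$ immediately yields $\dim V/W \leq \langle 2\rho,\mu\rangle - N\langle\rho,\beta^\vee\rangle$.

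For the matching lower bound, I would simplify by conjugating by $x^{-1} \in G(\mathcal{O})$. Set $\lambda' := \lambda+\mu-N\beta^\vee$ and $M_0 := \op{Ad}_{t^{\lambda'}}\mathfrak{g}(\mathcal{O})$; then the map $\xi \mapsto \op{Ad}_{x^{-1}}(\xi) \bmod (\mathfrak{g}(\mathcal{O}) \cap M_0)$ induces an injection
\[
V/W \;\hookrightarrow\; \mathfrak{g}(\mathcal{O})/(\mathfrak{g}(\mathcal{O}) \cap M_0) \;\cong\; \bigoplus_{\gamma \in \Phi_G^+} \mathfrak{g}_{-\gamma}\mathcal{O}/t^{\langle\gamma,\lambda'\rangle}\mathcal{O}.
\]
Since $x$ is a product of positive root subgroup elements, $\op{Ad}_{x^{-1}}$ sends $\mathfrak{g}_\gamma$ for $\gamma \geq 0$ into the upper Borel subalgebra, and so only negative-root vectors in $V$ produce nonzero images in the quotient. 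For each $e_{-\gamma}\otimes t^n \in V$, expanding $\op{Ad}_{x^{-1}}$ via Lemmas~\ref{sberg} and~\ref{simpity} gives $e_{-\gamma}\otimes t^n$ plus twisted terms of the form $c_S\, e_{(\sum_{i\in S}\alpha_{p,i}) - \gamma} \otimes t^{n + \sum_{i\in S}(\langle\alpha_{p,i},\lambda\rangle - N)}$ over subsets $S \subseteq \{p, \ldots, q\}$ with nonzero iterated bracket. The direct contributions ($S = \emptyset$) from $e_{-\gamma}\otimes t^n$ with $\langle\gamma,\lambda\rangle \leq n < \langle\gamma,\lambda'\rangle$ produce $\sum_{\gamma \in \Phi_G^+}\max(0,\langle\gamma,\mu - N\beta^\vee\rangle)$ independent classes; this naive count falls short of the target by exactly $N\langle\rho,\beta^\vee\rangle$, and the shortfall is made up by those twisted contributions ($S \neq \emptyset$) for which $\gamma - \sum_{i\in S}\alpha_{p,i}$ is a positive root with the resulting exponent below its corresponding bound.

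The principal obstacle will be the precise bookkeeping for the twisted contributions: enumerating which triples $(S,\gamma,n)$ produce surviving classes, verifying their linear independence from the direct ones, and confirming their total count is exactly $N\langle\rho,\beta^\vee\rangle$. This analysis should exploit the type A combinatorics of positive roots and their pairings with $\beta^\vee$, the explicit form~\eqref{xXxXx} of $x$, and condition (2) of Theorem~\ref{geom} constraining the exponent bounds. Combined with the upper bound, this will yield the stated equality.
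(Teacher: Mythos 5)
The main gap is that the lower bound, which carries the entire mathematical content of the proposition, is never established. You correctly set up an injection $V/W\hookrightarrow\bigoplus_{\gamma\in\Phi_G^+}\mathfrak{g}_{-\gamma}\otimes\mathcal{O}/t^{\langle\gamma,\lambda'\rangle}\mathcal{O}$ (note this uses a sign convention for $\op{Ad}_{t^{\lambda'}}$ opposite to the paper's, under which the quotient sits on \emph{positive} root spaces and positive-root and Cartan components of $v$ are the ones that contribute; your version is internally consistent only if the flip is carried through uniformly), and you correctly observe that the ``direct'' contributions supply $\langle 2\rho,\mu-N\beta^\vee\rangle$ of the required $\langle 2\rho,\mu\rangle-N\langle\rho,\beta^\vee\rangle$, so the shortfall is exactly $N\langle\rho,\beta^\vee\rangle$. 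But you then explicitly defer the enumeration of the surviving twisted classes, their linear independence from the direct ones, and the verification that their total count is exactly $N\langle\rho,\beta^\vee\rangle$. That deferred bookkeeping is precisely where condition (2) of Theorem~\ref{geom} must be invoked, and it \emph{is} the proposition; as written, what you have is a plan, not a proof.

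Your upper-bound observation is, however, a genuine shortcut not used in the paper: since $\xi\in\cyc$ by Proposition~\ref{point} and $G(\mathcal{O}).\xi\subseteq\cyc$, the general bound $\dim\cyc\le\langle\rho,\lambda+\mu+\nu\rangle$ together with the tangent-space identification gives $\dim V/W\le\langle 2\rho,\mu\rangle-N\langle\rho,\beta^\vee\rangle$ for free. The paper instead proves equality directly, via a filtration $W\subseteq W'\subseteq V$. It defines a surjective $\mathcal{O}$-linear map $\phi_1$ on $V$ (recording the case (B) entries of $u=x^{-1}vx$) with kernel $W'$, then a second surjective map $\phi_2$ on $W'$ (recording cases (C), (D), (E)) with kernel $W$, and sums $\dim\op{Im}\phi_1+\dim\op{Im}\phi_2$ over the exhaustive partition of positive roots into cases (B)--(E). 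The two-stage filtration matters: the sharpened valuation bounds for $u_{i,j}$ in cases (C) and (D) only become available once the case (B) conditions defining $W'$ are imposed (Lemma~\ref{list2}), which is exactly the phenomenon your ``twisted contributions'' bookkeeping would have to capture by hand. So while your upper bound is essentially free, the difficult direction requires work of the same granularity as the paper's, and the proposal has not supplied it.
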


Here is our plan for the proof: we will describe the subspace $W\subseteq V$ explicitly by the vanishing of certain linear equations; this will afford us with a description of the quotient $V/W$, whose dimension we will then calculate. 

Let $v\in V$ be arbitrary. Since $v\in \mathfrak{g}(\mathcal{O})$ we may express $v$ as a matrix: 
$$
v = \left[\begin{array}{ccc}
v_{1,1} & \hdots & v_{1,n+1}\\
\vdots & & \vdots \\
v_{n+1,1} & \hdots & v_{n+1,n+1}
\end{array}
\right],
$$
with each $v_{i,j}\in \mathcal{O}$ and $v_{n+1,n+1} = -\sum_{i=1}^n v_{i,i}$. 
Since we will be working in this context (type A, with matrix coordinates) for the entirety of this section, we make the following notational convention.

\begin{notation}
	Let $\varepsilon_{i,j}$ be the positive root $\alpha_i+\hdots+\alpha_{j-1}$ whenever $i<j$. Therefore $\varepsilon_{i,j}+\varepsilon_{j,k} = \varepsilon_{i,k}$ for any $i<j<k$.
\end{notation}

The stipulation $v\in t^\lambda \mathfrak{g}(\mathcal{O})t^{-\lambda}$ means, for all $1\le i<j\le n+1$, 
$$
t^{\langle \varepsilon_{i,j},\lambda\rangle}\big|v_{i,j}.
$$

(Our $\varepsilon_{i,j}$ notation avoids the ugly but equivalent formulation $t^{\langle \alpha_{i,j-1},\lambda\rangle}\big|v_{i,j}$.) On the other hand, for all $i \geq j$, we merely require $val(v_{i,j}) \geq 0$ (here $val$ is the valuation map $\mathcal{K}\to \mathbb{Z}\cup \{\infty\}$ sending $t^ku\mapsto k$ if $u$ is a unit in $\mathcal{O}$ and $0\mapsto \infty$).
These conditions completely characterize elements of $V$.

Now, $v\in W$ if and only if $v\in V$ and $x^{-1}vx \in t^{ \nu^{*}}\mathfrak{g}(\mathcal{O})t^{- \nu^*}$, where $ \nu^* = \lambda+\mu-N\beta^\vee$. The crux of what follows is to explicitly write $u=x^{-1}vx = (u_{i,j})$ in matrix coordinates; then we can check containment in $t^{\nu^*}\mathfrak{g}(\mathcal{O})t^{- \nu^*}$ coordinate-wise. 

For $i<j$, consider the coordinate $u_{i,j}$, which corresponds to the root subgroup $x_{\alpha_{i,j-1}}$. Clearly the value of $u_{i,j}=(x^{-1}vx)_{(i,j)}$ depends on the relationship between $\alpha_{i,j-1}$ and $\beta$. (The cases $i\ge j$ will not need to be examined as closely: see Remark \ref{ignore}.)

To be precise, in root language there are the following four possibilities for a positive root:

\begin{enumerate}
	\item  A root $\alpha$ such that $\alpha-\alpha_{p,i} \notin \Phi \cup \{0\}$ for all $i$.
	\item A root $\alpha$ such that $\alpha-\alpha_{p,i} \in \Phi $ for multiple $i$'s, but $\alpha$ is not one of the $\alpha_{p,i}$.
	\item A root $\alpha$ such that $\alpha - \alpha_{p,i} \in \Phi$ for exactly one $i$, but $\alpha$ is not one of the $\alpha_{p,i}$.
	\item The root $\alpha$ is one of the $\alpha_{p,i}$.
\end{enumerate}

Translating from the language of root subgroups to matrix coordinates, these four cases are written in order below:
$$
u_{i,j} =\left\{
\begin{array}{cc}
v_{i,j}, &  j\le p \text{ or }j>q+1,i\ne p \\
v_{i,j}-\sum_{k=p}^{q} a_{k+1} v_{k+1,j}, & j\le p\text{ or }j>q+1, i=p\\
v_{i,j} + a_{j}v_{i,p}, & p<j\le q+1, i\ne p \\
v_{i,j} -\sum_{k=p}^q a_{k+1}v_{k+1,j} + a_j\left(v_{p,p} -\sum_{k=p}^q a_{k+1}v_{k+1,p}\right) , & p<j\le q+1,i=p
\end{array}
\right.
$$
where $a_j = t^{\langle \varepsilon_{p,j}, \lambda \rangle-N}$ for $p<j\le q+1$.

 This particular breakdown into 4 cases is a pleasant feature of working in type $A$; in the other types there would be more cases to consider.

\begin{example}
	In our running example of $A_4$ with $\beta=\alpha_2+\alpha_3 \in \Phi_{G}$ we provide some explicit examples of the different cases. 
	\begin{enumerate}
		\item Case 1: $\alpha_1,  \alpha_4, \alpha_{3,4}, \alpha_{1,4} $
		\item Case 2: $\alpha_{2,4}$
		\item Case 3: $\alpha_3, \alpha_{1,2}, \alpha_{1,3}$
		\item Case 4: $\alpha_2, \alpha_{2,3}$.
	\end{enumerate}

	The conjugated matrix is

	$$
	\hspace{-0.5in}\left[
	\begin{array}{ccccc}
	v_{11}  & v_{12}    &   v_{13} + v_{12}    &  v_{14}+  v_{12}t   &     v_{15} \\
	v_{21} - v_{31}    -v_{41}t  &   v_{22} - v_{32}   -v_{42}t &  v_{23}-v_{33}-v_{43}t  & v_{24}-v_{34}-v_{44}t & v_{25} - v_{35}  -v_{45}t \\
	&     &   +(v_{22}-v_{32}-v_{42}t)   &   +t(v_{22}-v_{32}-v_{42}t)   & \\
	v_{31}    &      v_{32}    &     v_{33} + v_{32}    &     v_{34} + v_{32}t     &     v_{35}\\
	v_{41}  &      v_{42}     &    v_{43} + v_{42}   &     v_{44}  + v_{42}t   &     v_{45}\\
	v_{51}   &      v_{52}   &     v_{53} + v_{52}    &    v_{54}+ v_{52}t  &    v_{55}
	\end{array}
	\right].
	$$
	
\end{example}

	We tabulate the minimal possible valuations of the $u_{i,j}$. 
	

\begin{listy} \label{list1}	
	\begin{enumerate}[label = (\Alph*)]
		
			\item {\bf Case $i \geq j$.} 
			
			If $i \geq j$, then $u_{i,j}$ is a polynomial function in $t$ and $v_{i',j'}$ such that $i' \geq j'$, in which $v_{i,j}$ appears as a summand. The only condition on $v_{i,j}$ for $i \geq j$ is that $val(v_{i,j}) \geq 0$. Thus the minimum possible valuation for all such matrix coefficients is $0$.
		
\bigskip\noindent 
In the remaining cases, we therefore assume $i<j$. 
\bigskip
		
		\item {\bf Case $j\le p$ or $j>q+1$, $i\ne p$.}
		
		Since $u_{i,j}=v_{i,j}$, we know that $u_{i,j}$ is always divisible by $t^{\langle \varepsilon_{i,j}, \lambda \rangle}$.

		\item {\bf Case $i=p$, $j>q+1$}

		Here $u_{i,j} = v_{i,j}-\sum_{k=p}^{q} a_{k+1} v_{k+1,j}$. 

			We have $v_{i,j}$ is divisible by $t^{\langle \varepsilon_{i,j}, \lambda \rangle}$. Recall that $a_l=t^{\langle \varepsilon_{p,l}, \lambda \rangle -N}$. Moreover we know that $v_{k+1,j}$ is divisible by $t^{\langle \varepsilon_{k+1,j}, \lambda \rangle}$, since all of the $v_{k+1,j}$ appearing in this sum have $k+1 \leq j$. Thus we find that each term $a_{k+1}v_{k+1,j}$ is divisible by $t^{\langle \varepsilon_{p, k+1}, \lambda \rangle -N}t^{\langle \varepsilon_{k+1, j}, \lambda \rangle }=t^{\langle \varepsilon_{p,j}, \lambda \rangle -N}$. Since $i=p$ in this case, $v_{i,j}$ is divisible by $t^{\langle \varepsilon_{p,j}, \lambda \rangle}$, and so $u_{i,j}$ is divisible by $t^{\langle \varepsilon_{i,j}, \lambda \rangle -N}$.

		\item {\bf Case $p<j\le q+1$, $i\ne p$} 
		\begin{enumerate}
			 In this case $u_{i,j} = v_{i,j}+a_{j}v_{i,p}$. 
			 
			 \item $i >p$. As before $v_{i,j}$ is divisible by $t^{\langle \varepsilon_{i,j},\lambda \rangle}$. If $i>p$, $v_{i,p}$ merely needs to have nonnegative valuation so $a_jv_{i,p}$ is divisible by 
			$$
			t^{\langle \varepsilon_{p, j}, \lambda \rangle -N}.
			$$
			Note that, since $\beta^\vee - \alpha^\vee_p \in \Phi_{G^\vee} \sqcup \{0\}$, we have
			$$
			\langle \varepsilon_{p,j},\lambda\rangle -N \ge \langle \varepsilon_{p+1,j},\lambda\rangle \ge \langle \varepsilon_{i,j},\lambda\rangle,
			$$
			so if $i>p$ then $t^{\langle\varepsilon_{i,j},\lambda \rangle}\big|u_{i,j}$.
			\item $i<p$. Again $v_{i,j}$ is divisible by $t^{\langle \varepsilon_{i,j}, \lambda \rangle}$, and $a_j v_{i,p}$ is divisible by \[t^{\langle \varepsilon_{p,j}, \lambda \rangle -N}t^{\langle \varepsilon_{i,j}, \lambda \rangle}.\]
			
			\end{enumerate}

		\item {\bf Case $p=i<j\leq q+1$}
		
		Here $u_{i,j}$ is divisible by $t^{\langle \varepsilon_{p,j},\lambda\rangle -N}$, as we explain: the labels indicate lower bounds on powers of $t$ dividing each term below. 
		
		$$
		u_{p,j} = \underbrace{v_{p,j}}_{t^{\langle\varepsilon_{p,j},\lambda\rangle}}  -\sum_{k=p}^{j-2} \underbrace{a_{k+1}v_{k+1,j}}_{t^{\langle\varepsilon_{p,j},\lambda\rangle-N}} -\sum_{k=j-1}^q \underbrace{a_{k+1}v_{k+1,j}}_{t^{\langle\varepsilon_{p,k+1},\lambda\rangle-N}} + \underbrace{a_j}_{t^{\langle\varepsilon_{p,j},\lambda\rangle-N}} \left(v_{p,p} -\sum_{k=p}^q a_{k+1}v_{k+1,p}\right);
		$$
		note that if $k\ge j-1$ then $\langle \varepsilon_{p,k+1},\lambda\rangle \ge \langle \varepsilon_{p,j},\lambda\rangle$.

	\end{enumerate}
\end{listy}
With this result in hand, we have control over the minimum possible valuations for any matrix function $u_{ij}$ where $u=x^{-1}vx$ and $v \in V$. 

Our next task is to compare these minimum valuations with the minimum valuations imposed by demanding that $x^{-1}vx \in Ad_{t^{\nu^*}}(\mathfrak{g}(\mathcal{O}))$.

\begin{remark}\label{ignore}
Observe that no further condition needs to be applied to the coordinates $u_{i,j}$, $i\le j$ in order to demand that $x^{-1}vx\in Ad_{t^{\nu^*}}(\mathfrak{g}(\mathcal{O}))$. Therefore we may ignore case (A) in comparing the vector spaces $V$ and $W$. 
\end{remark}

In our description of minimum valuations above, cases (C), (D) and (E) depend on case (B) in the sense that if we demand that all matrix coefficients of the form (B) have higher valuations, then the minimum possible valuations in the remaining cases may change as well. For this reason we study case (B) first.

	Let $v \in V$, so that $x^{-1}vx$ has minimum valuation data as in List \ref{list1}. A necessary (but insufficient) condition for $x^{-1}vx \in W$ is that for all $u_{i,j}=(x^{-1}vx)_{i,j}$ of type (B), we must have \[t^{\langle \varepsilon_{i,j}, v^* \rangle}|u_{i,j}\] or 
	
	\begin{equation} \label{cond}
	 val(u_{i,j}) \geq \langle \varepsilon_{i,j}, \lambda+\mu-N \beta^{\vee} \rangle .
	\end{equation}
	
	This motivates the definition of the following map $\phi_1$.

	\begin{lemma}\label{lem1}
		Define a map 
		\begin{center}
			\begin{tikzcd}
				V\arrow[r,"\phi_1"] & \displaystyle\bigoplus_{\textnormal{ case (B)}} \dfrac{t^{\langle \varepsilon_{i,j},\lambda \rangle}\mathcal{O}}{(t^{\langle \varepsilon_{i,j},\nu^* \rangle})}\\
				v \arrow[r, maps to] & (u_{i,j}).
			\end{tikzcd}
		\end{center}
		Then $\phi_1$ is surjective. 
		
	\end{lemma}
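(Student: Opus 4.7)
The plan is to observe that Lemma \ref{lem1} is, at its heart, a statement that a projection onto a subset of independent matrix coordinates is surjective. In case (B), the table immediately preceding List \ref{list1} gives $u_{i,j} = v_{i,j}$, so $\phi_1$ coincides with the composition of the projection $v \mapsto (v_{i,j})_{(i,j)\in(\textnormal{B})}$ with reduction modulo $t^{\langle \varepsilon_{i,j},\nu^*\rangle}$. The strategy is therefore: first verify well-definedness (the image lands in the claimed direct sum and the target quotient makes sense), and then construct an explicit preimage coordinate-by-coordinate.

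For well-definedness, membership in $V$ gives $v_{i,j} \in t^{\langle \varepsilon_{i,j},\lambda\rangle}\mathcal{O}$ for all $i<j$, so the numerator is correct. One must also check $\langle \varepsilon_{i,j},\nu^*\rangle \ge \langle \varepsilon_{i,j},\lambda\rangle$, which reduces to $\langle \varepsilon_{i,j},\mu\rangle \ge N\langle \varepsilon_{i,j},\beta^\vee\rangle$ for $(i,j)$ in case (B). Using $\beta^\vee = \alpha_p^\vee + \cdots + \alpha_q^\vee$ and the standard type $A$ pairing $\langle e_i - e_j, e_p^* - e_{q+1}^* \rangle = \delta_{ip}+\delta_{j,q+1}-\delta_{i,q+1}-\delta_{jp}$, a brief case check in the two sub-regimes $j\le p$ and $j>q+1$ (with $i\ne p$) gives $\langle \varepsilon_{i,j},\beta^\vee\rangle \in \{0,-1\}$. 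Since $\mu$ is dominant, the required inequality holds and the quotient is well-defined.

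For surjectivity, given any tuple $(w_{i,j})$ in the codomain, lift each $w_{i,j}$ to some $\widetilde{w}_{i,j} \in t^{\langle \varepsilon_{i,j},\lambda\rangle}\mathcal{O}$. Define $v \in \mathfrak{g}(\mathcal{O})$ by declaring
\[
v_{i,j} = \begin{cases} \widetilde{w}_{i,j}, & (i,j)\in(\textnormal{B}), \\ 0, & \text{otherwise,} \end{cases}
\]
including zero on the diagonal (so the trace condition is trivially satisfied). Each valuation constraint defining $V$ is imposed on a single matrix coordinate independently of the others, so $v \in V$; and by construction $\phi_1(v) = (w_{i,j})$.

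The proof really has no obstacle: the key conceptual point is that the formula in case (B) is the identity, and the verification of inner-product signs is the only nontrivial input, but this is a direct combinatorial check in type $A$.
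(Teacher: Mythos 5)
Your proof is correct and is essentially the argument in the paper: in case (B) the map $\phi_1$ is a projection onto independent matrix coordinates, and surjectivity follows by directly writing down preimages supported only on case-(B) positions (the paper does the same via $\mathcal{O}$-module generators together with $\mathcal{O}$-linearity). Your explicit check that $\langle \varepsilon_{i,j},\nu^*\rangle \ge \langle \varepsilon_{i,j},\lambda\rangle$ in case (B) — so that the target quotient modules are the expected nonzero ones — is a welcome piece of extra care that the paper leaves implicit, and the pairing computation showing $\langle\varepsilon_{i,j},\beta^\vee\rangle\in\{0,-1\}$ in the two sub-regimes is right.
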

	
	\begin{proof}
		For a given $i,j$ of case (B), set $v_{i,j} = t^{\langle \varepsilon_{i,j},\lambda\rangle}$ and all other $v_{i',j'}=0$. Then $v\in V$ and $\phi_1(v)$ generates the range as an $\mathcal{O}$-module. As $\phi_1$ is an $\mathcal{O}$-linear morphism of $\mathcal{O}$-modules, this establishes surjectivity. 
		\end{proof}

	Set $W':=\ker \phi_1$. Note that $W\subseteq W'$, but in general they are not equal. Note also that $W'$ is an $\mathcal{O}$-submodule of $V$.
	
	We see that if $v \in V$ is also in $ker(\phi_1)$ then we gain additional information about the minimal possible valuations on the matrix coordinates $u_{ij}=(x^{-1}vx)_{i,j}$ where $i,j$ falls into cases (C),(D),or (E). We record this additional information in the following lemma.
	
	\begin{lemma}\label{list2}
		Assume that $v \in V$ and that $\phi_1(v)=0$. Then $u_{i,j}=(x^{-1}vx)_{i,j}$ is divisible by $t^{\langle \varepsilon_{i,j}, \lambda \rangle}$ if $i,j$ falls in case (C) or (D), and $u_{i,j}$ is divisible by $t^{\langle \varepsilon_{i,j}, \lambda \rangle -N}$ if $u_{i,j}$ falls in case (E). Note that this increases the minimal possible valuations for cases (C) and (D); compare with the minimal possible valuations in List \ref{list1}.
	\end{lemma}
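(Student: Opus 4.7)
My strategy is to revisit the case-by-case valuation analysis that produced List~\ref{list1}, but now with the refined divisibility that $\phi_1(v)=0$ enforces: for each index $(k,\ell)$ in case (B), the entry $v_{k,\ell}$ is divisible not merely by $t^{\langle \varepsilon_{k,\ell},\lambda\rangle}$ but by $t^{\langle \varepsilon_{k,\ell},\nu^*\rangle}$, where $\nu^*=\lambda+\mu-N\beta^\vee$ (a genuine improvement, since a short check in the $e_a-e_b$ model gives $\langle \varepsilon_{k,\ell},\beta^\vee\rangle\le 0$ for every case-(B) index and $\mu$ is dominant). For each of cases (C), (D), (E) I will pick out the summands of the explicit formula for $u_{i,j}=(x^{-1}vx)_{i,j}$ whose entries carry case-(B) indices and recompute their valuation with the stronger bound.

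For Case (C), where $i=p$ and $j>q+1$, write $u_{p,j}=v_{p,j}-\sum_{k=p}^{q}a_{k+1}v_{k+1,j}$. Every index $(k+1,j)$ in the sum lies in case (B), so the valuation of $a_{k+1}v_{k+1,j}$ is at least
\[
\langle \varepsilon_{p,k+1},\lambda\rangle-N+\langle \varepsilon_{k+1,j},\nu^*\rangle = \langle \varepsilon_{p,j},\lambda\rangle - N + \langle \varepsilon_{k+1,j},\mu\rangle - N\langle \varepsilon_{k+1,j},\beta^\vee\rangle.
\]
The type-A identity $\langle \varepsilon_{k+1,j},\beta^\vee\rangle=-\delta_{k,q}$ finishes the job: for $k=q$ the right side is already $\ge \langle \varepsilon_{p,j},\lambda\rangle$; for $k<q$ the simple root $\alpha_q$ lies in the support of $\varepsilon_{k+1,j}$, and condition~(2) of Theorem~\ref{geom} (applied to $\beta^\vee-\alpha_q^\vee\in \Phi_{G^\vee}\sqcup\{0\}$) yields $\langle \varepsilon_{k+1,j},\mu\rangle\ge \langle \alpha_q,\mu\rangle\ge N$. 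For Case (D), the subcase $i>p$ was already covered in List~\ref{list1}; for $i<p$, the only new case-(B) index is $(i,p)$, so $v_{i,p}$ is divisible by $t^{\langle \varepsilon_{i,p},\nu^*\rangle}$. Combined with $\langle \varepsilon_{i,p},\beta^\vee\rangle=-1$, the term $a_jv_{i,p}$ acquires valuation at least $\langle \varepsilon_{i,j},\lambda\rangle + \langle \varepsilon_{i,p},\mu\rangle \ge \langle \varepsilon_{i,j},\lambda\rangle$, as required.

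For Case (E), every matrix entry appearing in the expansion of $u_{p,j}$ is either on or below the diagonal, or has upper-triangular column index $j\in(p,q+1]$, so no such index lies in case (B). The hypothesis $\phi_1(v)=0$ is vacuous here, and the bound $t^{\langle \varepsilon_{p,j},\lambda\rangle - N}\mid u_{p,j}$ from List~\ref{list1} is inherited unchanged. The principal obstacle is really bookkeeping: correctly identifying which indices in each matrix expansion fall in case (B). Once that classification is settled, the argument collapses to the two type-A pairing identities $\langle \varepsilon_{k+1,j},\beta^\vee\rangle=-\delta_{k,q}$ and $\langle \varepsilon_{i,p},\beta^\vee\rangle=-1$, together with the divisibility $\langle \alpha_p,\lambda\rangle,\langle \alpha_q,\mu\rangle\ge N$ supplied by condition~(2) — and the complete absence of case-(B) indices in the Case~(E) expansion is precisely why that valuation bound does not improve.
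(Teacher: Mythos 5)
Your proof is correct and follows essentially the same case-by-case route as the paper's: for each of cases (C), (D), (E) you identify which entries in the matrix expansion of $u_{i,j}$ carry case-(B) indices, use $\phi_1(v)=0$ to upgrade their divisibility, and close the valuation estimates with condition (2) of Theorem \ref{geom}. The only cosmetic difference is that you phrase the key inequalities via the explicit pairing identities $\langle \varepsilon_{k+1,j},\beta^\vee\rangle=-\delta_{k,q}$ and $\langle \varepsilon_{i,p},\beta^\vee\rangle=-1$, whereas the paper records the same facts as root/root-string observations.
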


\begin{proof}
	\begin{enumerate}
		\item Case (C): Here $u_{i,j} = v_{i,j}-\sum_{k=p}^{q} a_{k+1} v_{k+1,j}$. 
		
		We know that $v_{i,j}$ is divisible by $t^{\langle \varepsilon_{i,j},\lambda\rangle}$. Since $\phi_1(v)=0$, we know that for each pair $i,j$ in case (B), we have $t^{\langle \varepsilon_{i,j}, \nu^* \rangle}|v_{i,j}$. In other words, each $v_{k+1,j}$ is divisible by $t^{\langle \varepsilon_{k+1,j}, \lambda+\mu-N\beta^\vee \rangle}$. If $k<q$, then 
		$$
		\langle \varepsilon_{k+1, j} ,\mu\rangle \ge \langle \alpha_{q},\mu\rangle\ge N
		$$
		by condition (2) of Theorem \ref{geom} and 
		$$
		\langle \varepsilon_{k+1,j}, -N\beta^\vee\rangle = 0,
		$$
		since necessarily $p<k+1$ and therefore $\varepsilon_{k+1,j}-\beta$ is not a root. 
		Otherwise, $k=q$ and 
		$$
		\langle \varepsilon_{q+1,j},\mu\rangle \ge 0;
		$$
		$$\langle \varepsilon_{q+1,j}, -N\beta^\vee\rangle = N. 
		$$
		
		Therefore in any case
		$$
		\langle \varepsilon_{k+1,j},\lambda+\mu-
		N\beta^\vee\rangle\ge \langle \varepsilon_{k+1,j},\lambda\rangle +N
		$$
		and each term $a_{k+1} v_{k+1,j}$ is divisible by 
		$$
		t^{\langle \varepsilon_{p,k+1},\lambda \rangle - N}\cdot t^{\langle \varepsilon_{k+1,j}  ,\lambda  \rangle+N} = t^{\langle \varepsilon_{p,j},\lambda\rangle },
		$$ 
		so the entire $u_{i,j}$ is divisible by $t^{\langle \varepsilon_{p,j},\lambda\rangle }$.
		
		\item Case (D): In this case $u_{i,j} = v_{i,j}+a_{j}v_{i,p}$.
		
		 As before $v_{i,j}$ is divisible by $t^{\langle \varepsilon_{i,j},\lambda \rangle}$. If $i>p$, $a_jv_{i,p}$ is divisible by 
		$$
		t^{\langle \varepsilon_{p,j},\lambda \rangle-N};
		$$
		note that 
		$$
		\langle \varepsilon_{p,j},\lambda\rangle -N \ge \langle \varepsilon_{p+1,j} ,\lambda\rangle \ge \langle \varepsilon_{i,j},\lambda\rangle,
		$$
		so if $i>p$ then $t^{\langle\varepsilon_{i,j},\lambda \rangle}\big|u_{i,j}$.
		
		On the other hand, if $i<p$, then 
		$$
		\langle \varepsilon_{i,p} ,\mu-N\beta^\vee\rangle \ge \langle \varepsilon_{i,p},-N\beta^\vee\rangle = N,
		$$
		so the condition $\phi_1(v)=0$ is sufficient to imply that $a_jv_{i,p}$ is divisible by 
		$$
		t^{\langle \varepsilon_{p,j},\lambda \rangle-N}\cdot t^{\langle \varepsilon_{i,p},\lambda \rangle+N} = t^{\langle \varepsilon_{i,j},\lambda \rangle}
		$$
		if $i<p$. 
		
		We conclude that $u_{i,j}$ is divisible by $t^{\langle \varepsilon_{i,j},\lambda\rangle}$ in either case.
		
		\item Case (E): in this case, demanding that $\phi_1(v)=0$ does not provide us with higher minimal possible valuations of these matrix entries. We nonetheless still have $t^{\langle \varepsilon_{i,j}, \lambda \rangle -N}|u_{i,j}$ by List \ref{list1}.
	\end{enumerate}	
\end{proof}	

\begin{summary}
	So far, we let $v \in V= \mathfrak{g}(\mathcal{O}) \cap Ad_{t^{\lambda}}(\mathfrak{g}(\mathcal{O}))$. After conjugating $ x^{-1}vx$, we obtain minimal possible valuations for all matrix entries $u_{i,j}=(x^{-1}vx)_{i,j}$. These are listed in List \ref{list1}. There are 5 different possibilities (A),(B),(C),(D),(E), depending on the matrix coordinate $i,j$. For our dimension comparison $dim V/W$, we may ignore all of the matrix coefficients below the main diagonal since the valuation conditions on these entries will always be trivially satisfied; this means that we may ignore case (A).
	
	The next step is to examine when $u=x^{-1}vx \in W$. As mentioned before, this adds no new constraints for matrix coordinates of case (A). So we next assume that the matrix coefficients in case (B) have sufficiently high valuations so that $u$ has a chance of being in $W$. This assumption on the matrix coefficients in case (B) will generally increase minimal possible  valuations of $u_{i,j}$ for $i,j$ in cases (C) and (D), but not (E). These new valuations are listed in Lemma \ref{list2}.
\end{summary}	

We may now proceed with a surjectivity lemma, and then finally with the dimension computation.
	\begin{lemma}\label{lem2}
		Define a map 
		\begin{center}
			\begin{tikzcd}
				W' \arrow[r,"\phi_2"] & 
				\displaystyle\bigoplus_{\textnormal{ case (C)}} \dfrac{t^{\langle \varepsilon_{p,j},\lambda \rangle}\mathcal{O}}{(t^{\langle \varepsilon_{p,j},\nu^* \rangle})} \oplus 
				\displaystyle\bigoplus_{\textnormal{ case (D)}} \dfrac{t^{\langle \varepsilon_{i,j},\lambda \rangle}\mathcal{O}}{(t^{\langle \varepsilon_{i,j},\nu^* \rangle})} \oplus 
				\displaystyle\bigoplus_{\textnormal{ case (E)}} \dfrac{t^{\langle \varepsilon_{p,j},\lambda \rangle-N}\mathcal{O}}{(t^{\langle \varepsilon_{p,j},\nu^* \rangle})} \\
				v \arrow[r,maps to] & ( u_{p,j}, \hspace{1in}  u_{i,j}, \hspace{1in}  u_{p,j}),
			\end{tikzcd}
		\end{center}
		notation as above. Then $\phi_2$ is well-defined and surjective, and $\ker \phi_2\simeq W$. 
	\end{lemma}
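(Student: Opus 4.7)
The lemma makes three claims: well-definedness of $\phi_2$, the identification $\ker \phi_2 \simeq W$, and surjectivity of $\phi_2$. Well-definedness is immediate from Lemma \ref{list2}: for $v \in W'$, each coordinate $u_{i,j}$ has valuation at least $\langle\varepsilon_{p,j},\lambda\rangle$ in case (C), $\langle\varepsilon_{i,j},\lambda\rangle$ in case (D), and $\langle\varepsilon_{p,j},\lambda\rangle - N$ in case (E), so reducing modulo $t^{\langle\varepsilon_{i,j},\nu^*\rangle}$ produces a well-defined $\mathcal{O}$-linear map.

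For the kernel identification, observe that $v \in W$ if and only if $v \in V$ and $u = x^{-1}vx$ lies in $\operatorname{Ad}_{t^{\nu^*}}\mathfrak{g}(\mathcal{O})$, equivalently $t^{\langle\varepsilon_{i,j},\nu^*\rangle} \mid u_{i,j}$ for each $i<j$ (the conditions for $i \geq j$ are automatic by Remark \ref{ignore}). Partitioning upper-triangular pairs into cases (B)--(E), the case (B) conditions are precisely $\phi_1(v) = 0$, i.e., $v \in W'$, while the conditions in cases (C), (D), (E) are precisely $\phi_2(v) = 0$. Hence $W = \ker \phi_2$.

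The main content is surjectivity, which I will establish by writing down explicit preimages of each cyclic generator of the target, mimicking Lemma \ref{lem1} but tracking cross-contributions. Here I exploit the simple product form $x = I + \sum_{j=p+1}^{q+1} a_j E_{p,j}$ (with $E_{p,j}$ the standard matrix unit), whose inverse is $x^{-1} = I - \sum_j a_j E_{p,j}$ since the $E_{p,j}$ pairwise commute. For case (C) at $(p, j_0)$ with $j_0 > q+1$, and for case (D) at $(i_0, j_0)$ with $i_0 < p$, the matrix-unit preimage $v = t^{\langle\varepsilon_{i_0, j_0},\lambda\rangle} E_{i_0, j_0}$ satisfies $x^{-1}vx = v$ (the $p$-th column and the rows indexed by $[p+1, q+1]$ of $v$ are zero), so the generator is hit cleanly with no cross-contribution.

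The principal obstacle is the two remaining subcases, case (D) with $p < i_0 < j_0 \leq q+1$ and case (E), where genuine cross-talk appears. For case (D) at such $(i_0, j_0)$, the preimage $v = t^{\langle\varepsilon_{i_0, j_0},\lambda\rangle} E_{i_0, j_0}$ yields $x^{-1}vx = v - a_{i_0} t^{\langle\varepsilon_{i_0, j_0},\lambda\rangle} E_{p, j_0}$, hitting the case (D) generator but also contributing to case (E) at $(p, j_0)$. For case (E) at $(p, j_0)$, I use a traceless diagonal $v = \sum_i c_i E_{i, i}$: a direct computation gives $x^{-1}vx - v = \sum_{j=p+1}^{q+1}(c_p - c_j) a_j E_{p, j}$, so choosing $c_p - c_j = \delta_{j, j_0}$ (with the tracelessness constraint always solvable over $\mathbb{C}$, even when $[p, q+1] = [1, n+1]$, by dividing by an integer) isolates exactly the case (E) generator at $(p, j_0)$ and leaves the other cases zero. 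Combining these two preimages cancels the unwanted case (E) contribution from the case (D) construction and recovers only the case (D) generator. The chief technical subtlety is this last diagonal-and-matrix-unit coordination and the need to verify the tracelessness constraint can always be resolved even in the extremal case.
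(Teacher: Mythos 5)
Your proof is correct and follows the same blueprint as the paper's: reduce well-definedness to Lemma~\ref{list2}, identify $\ker\phi_2$ with $W$ via the case partition of upper-triangular positions, and prove surjectivity by exhibiting explicit preimages of the cyclic $\mathcal{O}$-module generators. For cases (C) and (D) with $i_0<p$ your matrix-unit preimages coincide with the paper's; the genuine divergence is in case (D) with $p<i_0$, where you cancel the unwanted contribution $-a_{i_0}t^{\langle\varepsilon_{i_0,j_0},\lambda\rangle}E_{p,j_0}=-a_{j_0}E_{p,j_0}$ by adding a traceless diagonal $v_E$ with $c_p-c_{j_0}=1$ (the same $v_E$ that handles case (E)), whereas the paper sets a single diagonal entry $v_{j,j}=1$.

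Your version of this step is in fact more careful than the paper's, and worth flagging: plugging $v_{i_0,j_0}=t^{\langle\varepsilon_{i_0,j_0},\lambda\rangle}$, $v_{j_0,j_0}=1$ into the paper's own formula gives
\[
u_{p,j_0}=-a_{i_0}t^{\langle\varepsilon_{i_0,j_0},\lambda\rangle}-a_{j_0}v_{j_0,j_0}=-a_{j_0}-a_{j_0}=-2a_{j_0}\ne 0,
\]
so the claimed cancellation requires $v_{j_0,j_0}=-1$ rather than $+1$; your computation $x^{-1}v_Ex-v_E=\sum_{j=p+1}^{q+1}(c_p-c_j)a_jE_{p,j}$ fixes the sign bookkeeping automatically. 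Moreover, by prescribing $c_p-c_j=\delta_{j,j_0}$ rather than touching a single diagonal entry, you avoid spurious contributions at the other positions $(p,j)$ (relevant, e.g., when $q+1=n+1$ and the implicit tracelessness forces $v_{n+1,n+1}\ne 0$), and your observation that the trace constraint is always solvable over $\C$ by dividing by $n+1$ correctly disposes of the extremal case $[p,q+1]=[1,n+1]$. These are minor repairs rather than a different method, but they do tighten the surjectivity argument.
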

	
	\begin{proof}
		
		By definition, $\ker \phi_2 \simeq W$; appearing in this kernel means satisfying the valuation requirements for all possible matrix coefficients $u_{i,j}$. The map is well-defined by the divisibility considerations of cases (C) and (D), assuming (\ref{cond}) holds in case (B) (this is why we have created the space $W'$ and restricted our attention there). 
	
		An $\mathcal{O}$-module basis of the space on the right consists of matrices with a single nonzero entry $(t^n)_{i,j}$ where $u_{i,j}$ corresponds to one of the cases (C), (D) or (E), and where $n = \langle \varepsilon_{i,j}, \lambda \rangle $ when $u_{i,j}$ is in cases (C) and (D), and $n = \langle \varepsilon_{i,j}, \lambda \rangle -N $ when $u_{i,j}$ is in case (E). It is straightforward to produce elements of $W'$ which map to these basis elements under $\phi$.
		\begin{enumerate}
		\item Let $p,j$ be as in case (C). Set $v_{p,j} = t^{\langle \varepsilon_{p,j},\lambda\rangle}$ and all other $v_{i',j'}=0$. Then $v\in W'$ (the relevant $v_{i',j'}$s are $0$) and $\phi_2(v) = (\underbrace{t^{\langle \varepsilon_{p,j}\rangle}}_{\text{position $p,j$}},0,0)$.  
		
		\item Let $i,j$ be as in case (D). Set $v_{i,j} = t^{\langle \varepsilon_{i,j},\lambda\rangle}$ and $v_{i,p}=0$. If $i>p$, then set $v_{j,j} = 1$. Set all other $v_{i',j'}=0$. Therefore $v\in W'$ and $\phi_2(v) = (0,\underbrace{t^{\langle \varepsilon_{i,j},\lambda\rangle}}_{\text{position $i,j$}},0)$ (a cancellation occurs at position $p,j$ in the latter $0$ if $i>p$).
		
		\item Let $p,j$ be as in case (E). Set $v_{j,j}=1$ and all other $v_{i',j'}=0$. Then $v\in W'$ and $\phi_2(v) = (0,0,\underbrace{t^{\langle \varepsilon_{p,j}\rangle-N}}_{\text{position $p,j$}})$.
		\end{enumerate}
		We see that the standard $\mathcal{O}$-module generators of the range of $\phi_2$ are indeed in the image of $\phi_2$. Since $\phi_2$ is an $\mathcal{O}$-morphism, surjectivity follows. 
	\end{proof}

	We are finally in a position to prove Proposition \ref{dim}.
\begin{proof}[Proof of Proposition \ref{dim}]
	 The exact sequence 
	$$
	0\to W'/W \to V/W \to V/W' \to 0
	$$
	tells us that $\dim(V/W) = \dim (V/W')+\dim (W'/W)$. By construction,
	$$
	\dim(V/W') = \dim(\textnormal{Im}(\phi_1)) = \sum_{\textnormal{case (B)}} \langle \varepsilon_{i,j}, \nu^* - \lambda \rangle = \sum_{\textnormal{case (B)}} \langle \varepsilon_{i,j}, \mu-N\beta^\vee \rangle \\ 
	$$
	and similarly
	
	$$
	\begin{aligned}
	\dim(W'/W) = \dim(\textnormal{Im}(\phi_2)) = \sum_{\textnormal{ case (C)}} \langle & \varepsilon_{p,j},\mu-N\beta^\vee\rangle +
	\sum_{\textnormal{ case (D)}} \langle \varepsilon_{i,j},\mu-N\beta^\vee\rangle \\
	&+\sum_{\textnormal{ case (E)}} \left(\langle \varepsilon_{p,j},\mu-N\beta^\vee\rangle+N\right).
	\end{aligned}
	$$
	
	Note that, in case (E), the summation runs over $j$ such that $p < j \leq q+1$, which is $\langle \rho,\beta^\vee\rangle$-many terms. As each positive root of $G$ appears as an $\varepsilon_{i,j}$ in exactly one of our four cases (B),(C),(D),(E), we have in total that 
	$$
	\dim(V/W) = \langle 2\rho, \mu-N\beta^\vee \rangle +N\langle \rho,\beta^\vee\rangle = \langle 2\rho, \mu-N\beta^\vee \rangle + \langle \rho, N\beta^\vee \rangle = \langle \rho, 2\mu-N\beta^\vee \rangle.
	$$
\end{proof}

\begin{corollary}
	The dimension of the orbit $G(\mathcal{O}).\xi= G(\mathcal{O}).([\lambda], x[\lambda + \mu -N \beta^{\vee}],[0])$ is $\langle \rho, 2 \lambda+2 \mu-N \beta^{\vee} \rangle$. Thus the closure of this orbit is a maximal irreducible component of the cyclic convolution variety.
\end{corollary}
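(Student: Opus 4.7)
The plan is to compute the dimension of $G(\mathcal{O}).\xi \cong G(\mathcal{O})/\op{Stab}(\xi)$ by passing to the tangent space $\mathcal{T}$ at the identity, exactly as set up at the start of Section 4. There the tangent space is identified as
$$
\mathcal{T} = \dfrac{\mathfrak{g}(\mathcal{O})}{\op{Ad}_{t^{\lambda}}\mathfrak{g}(\mathcal{O}) \cap \op{Ad}_{xt^{\nu^*}}\mathfrak{g}(\mathcal{O}) \cap \mathfrak{g}(\mathcal{O})},
$$
with $V = \mathfrak{g}(\mathcal{O}) \cap \op{Ad}_{t^\lambda}\mathfrak{g}(\mathcal{O})$ and $W = V \cap \op{Ad}_{xt^{\nu^*}}\mathfrak{g}(\mathcal{O})$. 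The short exact sequence
$$
0 \to V/W \to \mathcal{T} \to \mathfrak{g}(\mathcal{O})/V \to 0
$$
reduces the computation to summing the dimensions of the two outer terms.

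For the quotient $\mathfrak{g}(\mathcal{O})/V$, I would invoke the standard identification $\dim\mathfrak{g}(\mathcal{O})/V = \langle 2\rho,\lambda\rangle$, which is the dimension of the affine Schubert cell through $[\lambda]$. For $V/W$, I would apply Proposition \ref{dim} directly, giving $\dim V/W = \langle 2\rho,\mu\rangle - N\langle \rho, \beta^\vee\rangle = \langle \rho, 2\mu - N\beta^\vee\rangle$. Summing yields
$$
\dim G(\mathcal{O}).\xi = \langle 2\rho,\lambda\rangle + \langle \rho, 2\mu - N\beta^\vee\rangle = \langle \rho, 2\lambda + 2\mu - N\beta^\vee\rangle,
$$
the claimed value.

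For the second sentence, I would use that $G(\mathcal{O})$ is connected, so $\overline{G(\mathcal{O}).\xi}$ is irreducible, and then verify that the computed dimension equals the maximum possible $\langle \rho,\lambda+\mu+\nu\rangle$. Using $w_0\rho = -\rho$, we have $\langle \rho,\nu\rangle = \langle \rho, -w_0(\lambda+\mu-N\beta^\vee)\rangle = \langle \rho, \lambda+\mu-N\beta^\vee\rangle$, whence $\langle \rho, \lambda+\mu+\nu\rangle = \langle \rho, 2\lambda+2\mu-N\beta^\vee\rangle$, matching exactly. Since all the genuine work has been bundled inside Proposition \ref{dim}, there is essentially no obstacle at this stage: the corollary is a brief assembly of the short exact sequence, the standard cell-dimension formula, and the $w_0$-symmetry of $\rho$.
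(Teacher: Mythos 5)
Your proposal is correct and matches the paper's argument exactly: the corollary is obtained by assembling the short exact sequence $0 \to V/W \to \mathcal{T} \to \mathfrak{g}(\mathcal{O})/V \to 0$ set up at the start of Section 4 with the standard formula $\dim\mathfrak{g}(\mathcal{O})/V = \langle 2\rho,\lambda\rangle$ and Proposition \ref{dim}, then using connectedness of $G(\mathcal{O})$ and the identity $\langle\rho,\nu\rangle = \langle\rho,\lambda+\mu-N\beta^\vee\rangle$ (from $w_0\rho=-\rho$) to conclude maximality. The paper leaves this assembly implicit; you have simply spelled it out.
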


\section{A second point if $\beta$ is not simple}

So far, the combination of Propositions \ref{point} and \ref{dim} proves the existence of a maximal dimensional component as desired in Theorem \ref{geom}. In this section we produce a second component of maximal dimension, assuming $\beta$ is not simple. Recall that there is a distinguished order two automorphism $\sigma: PGL_n(\mathbb{C}) \rightarrow PGL_n(\mathbb{C})$ obtained by \[A \mapsto \tilde{w}_0 (A^{t})^{-1} \tilde{w}_0^{-1}\] where $\tilde{w}_0 \in N(T)$ is a representative of the longest word $w_0 \in N(T)/T$.

We construct the second point as follows. Since $\sigma$ is obtained from a diagram automorphism, we will abuse notation and also write $\sigma$ for the action on weights or coweights. We extend the action of $\sigma$ to $G(\mathcal{K})$ via $t \mapsto -t$. Since $\sigma$ preserves $G(\mathcal{O})$, $\sigma$ acts on $\op{Gr}_G$. Moreover, by letting $\sigma$ act on all factors, we obtain an isomorphism \[\sigma: 	\cyc \to \op{Gr}_{G,c(\sigma(\lambda),\sigma(\mu),\sigma(\nu))}.\]

By our construction in the above sections, we may produce a point \[\xi'= ([\sigma(\lambda)], x'[\sigma(\lambda+\mu-N \beta^{\vee})], [0]) \in \op{Gr}_{G,c(\sigma(\lambda),\sigma(\mu),\sigma(\nu))}.\] By Proposition \ref{dim}, the $G(\mathcal{O})$-orbit through this point is of the  correct dimension. Thus, the image \[\sigma(\xi')= ([\lambda], \sigma(x')[\lambda+\mu-N\beta^{\vee}],[0]) \in \cyc\] is a point such that the $G(\mathcal{O})-$orbit is of the correct dimension. 

\begin{lemma}
	Let $\tilde x =  \prod_{i=p}^q x_{\alpha_{i,q}}(t^{\langle \alpha_{i,q},\lambda\rangle-N})$. Then $G(\mathcal{O}).([\lambda], \tilde{x}[\lambda+\mu-N\beta^{\vee}],[0])=G(\mathcal{O}).\sigma(\xi') \subset \cyc$.
\end{lemma}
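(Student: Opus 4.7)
The plan is to compute $\sigma(x')$ explicitly and then use a torus element to absorb the discrepancy between $\sigma(x')$ and $\tilde x$.

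First I would describe the automorphism $\sigma$ on root data: it acts as the Dynkin diagram automorphism $\alpha_i \mapsto \alpha_{n+1-i}$ on roots and likewise on coweights, with $\sigma^2 = \mathrm{id}$. In particular, if $\beta = \alpha_{p,q}$, then $\sigma(\beta) = \alpha_{p',q'}$ with $p' = n+1-q$ and $q' = n+1-p$, and the pairing between roots and coweights is $\sigma$-invariant: $\langle \sigma(\alpha), \sigma(\lambda)\rangle = \langle \alpha,\lambda\rangle$. The point $\xi'$ from the preceding paragraph is built from the data $(\sigma(\lambda),\sigma(\mu),\sigma(\beta))$ via Proposition \ref{point}, namely
\[
x' = \prod_{i=p'}^{q'} x_{\alpha_{p',i}}\bigl(t^{\langle \alpha_{p',i},\sigma(\lambda)\rangle - N}\bigr).
\]

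Next I would apply $\sigma$ term-by-term, using that $\sigma$ acts on $G(\mathcal{K})$ by the group automorphism together with $t\mapsto -t$. Since $\sigma(x_\alpha(a)) = x_{\sigma(\alpha)}(\sigma(a))$, reindexing by $j = n+1-i$ and using $\sigma^2 = \mathrm{id}$ on coweights to simplify the exponent yields
\[
\sigma(x') = \prod_{j=p}^{q} x_{\alpha_{j,q}}\bigl((-t)^{\langle \alpha_{j,q},\lambda\rangle - N}\bigr).
\]
Comparing with $\tilde x = \prod_{j=p}^{q} x_{\alpha_{j,q}}(t^{\langle \alpha_{j,q},\lambda\rangle - N})$, the two elements differ only by the signs $(-1)^{\langle \alpha_{j,q},\lambda\rangle - N}$ on each factor.

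To absorb these signs, I would produce a constant torus element $h \in T \subset G(\mathcal{O})$ with $\alpha_{j,q}(h) = (-1)^{\langle \alpha_{j,q},\lambda\rangle - N}$ for each $p \le j \le q$. In the $PGL_{n+1}$ presentation, taking $h = \mathrm{diag}(h_1,\dots,h_{n+1})$ with $h_{q+1}=1$ and $h_j = (-1)^{\langle \alpha_{j,q},\lambda\rangle - N}$ for $p \le j \le q$ (and the remaining entries arbitrary, say $1$) satisfies the requirement, since $\alpha_{j,q}(h) = h_j/h_{q+1}$. The standard identity $h x_\alpha(a) h^{-1} = x_\alpha(\alpha(h)a)$ then gives $h\,\sigma(x')\,h^{-1} = \tilde x$.

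Finally I would apply $h$ to $\sigma(\xi')$ diagonally. Since $h \in T \subset G(\mathcal{O})$, it fixes $[0]$ and $[\lambda]$ (indeed, $T$ fixes every $T$-fixed point in $\op{Gr}_G$). For the middle coordinate, commuting $h$ through $\sigma(x')$ produces $\tilde x$, and the leftover $h$ passes through $t^{\lambda+\mu-N\beta^\vee}$ up to conjugation by this torus cocharacter, yielding another element of $T\subset G(\mathcal{O})$ which is absorbed into the coset representative. Thus $h\cdot\sigma(\xi') = ([\lambda],\tilde x[\lambda+\mu-N\beta^\vee],[0])$, proving equality of $G(\mathcal{O})$-orbits. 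The main delicacy is the bookkeeping of signs and verifying that a single torus element simultaneously corrects all of them; once that is settled, the remainder is routine manipulation in $G(\mathcal{K})$.
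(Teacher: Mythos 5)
Your proposal is correct and follows essentially the same route as the paper: compute that $\sigma(x')$ and $\tilde x$ differ only by signs on the off-diagonal entries and then absorb those signs by a constant torus element in $G(\mathcal{O})$, which acts trivially on every $T$-fixed coset in $\op{Gr}_G$. The only cosmetic difference is that you use a single conjugating element $h$ (so $\tilde x = h\,\sigma(x')\,h^{-1}$), whereas the paper phrases it as independent left and right multiplication by elements of the form $\varpi_i(-1)$; these are equivalent here, and in either case the diagonal action of the left factor carries $\sigma(\xi')$ to $([\lambda],\tilde x[\lambda+\mu-N\beta^\vee],[0])$ because constant torus elements fix $[\lambda]$, $[\lambda+\mu-N\beta^\vee]$, and $[0]$.
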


\begin{proof}
	While $\tilde{x} \neq \sigma(x')$, these matrices only differ by some negative signs. However, we may transform from $\tilde{x}$ to $\sigma(x')$ via right and left multiplication by elements of the form $\varpi_i(-1)$, and these are elements of $PGL_n(\mathcal{O})$. 
\end{proof}

We will use the notation $\tilde{\xi}=\sigma(\xi')$.

\begin{example}
	In our running example, $\sigma(\lambda)=\lambda$, $\sigma(\mu)=\mu$ and $\sigma(\beta^{\vee})=\beta^{\vee}$. Thus $x'=x$, and $\sigma(x')$ is the matrix 
	\[\begin{pmatrix}1 & 0 & 0 & 0 & 0 \\ 0 & 1 & 0 & t^1 & 0 \\ 0 & 0 & 1 & -t^0  & 0 \\ 0 & 0 & 0 &1 & 0 \\ 0 & 0 & 0 & 0 & 1\\ \end{pmatrix}.\]
	Also, $\tilde{x}$ is the matrix 
		\[\begin{pmatrix}1 & 0 & 0 & 0 & 0 \\ 0 & 1 & 0 & t^1 & 0 \\ 0 & 0 & 1 & t^0  & 0 \\ 0 & 0 & 0 &1 & 0 \\ 0 & 0 & 0 & 0 & 1\\ \end{pmatrix},\]
and \[\tilde{x}=\begin{pmatrix} 1 & 0 & 0 & 0 & 0 \\ 0 & 1 & 0&0 &0 \\ 0 & 0 & -1 & 0 & 0 \\ 0 & 0 & 0 & 1 & 0 \\ 0 & 0 & 0 & 0 & 1 \\ \end{pmatrix}	\begin{pmatrix}1 & 0 & 0 & 0 & 0 \\ 0 & 1 & 0 & t^1 & 0 \\ 0 & 0 & 1 & -t^0  & 0 \\ 0 & 0 & 0 &1 & 0 \\ 0 & 0 & 0 & 0 & 1\\ \end{pmatrix}\begin{pmatrix} 1 & 0 & 0 & 0 & 0 \\ 0 & 1 & 0&0 &0 \\ 0 & 0 & -1 & 0 & 0 \\ 0 & 0 & 0 & 1 & 0 \\ 0 & 0 & 0 & 0 & 1 \\ \end{pmatrix}.\]
\end{example}

What remains to be shown is that \[G(\mathcal{O}). \xi \cap G(\mathcal{O}).\tilde{\xi} = \emptyset,\] so that we really have produced two distinct irreducible components of $\cyc$.

\begin{prop}
	With all notation as above, $G(\mathcal{O})\xi \cap G(\mathcal{O})\tilde \xi = \emptyset$.
\end{prop}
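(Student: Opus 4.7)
The plan is to argue by contradiction: if $g\in G(\mathcal{O})$ satisfies $g\xi=\tilde\xi$, then the valuation conditions coming from $h:=\tilde x^{-1}gx\in t^{\nu^*}G(\mathcal{O})t^{-\nu^*}$ will force $g(0)$ to fail invertibility, contradicting $g\in G(\mathcal{O})$. The first step is to unpack $g\xi=\tilde\xi$: since $[0]$ is automatically stabilized, the equation reduces to $g[\lambda]=[\lambda]$ together with $gx[\nu^*]=\tilde x[\nu^*]$, equivalent in turn to the valuation inequalities $\op{val}(g_{ij})\geq \lambda_i-\lambda_j$ and $\op{val}(h_{ij})\geq \nu^*_i-\nu^*_j$ for every pair $(i,j)$.

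The next step is to compute $h_{r,p+1}$ explicitly, taking advantage of the fact that $\tilde x^{-1}$ is supported in the $(q+1)$-st column of the $\beta$-block while $x$ is supported in the $p$-th row. A direct calculation gives
\[
h_{r,p+1}=g_{r,p+1}+t^{c_p}g_{r,p}\qquad\text{for }r<p,
\]
\[
h_{p,p+1}=g_{p,p+1}-t^{\tilde c_p}g_{q+1,p+1}+t^{c_p}g_{p,p}-t^{c_p+\tilde c_p}g_{q+1,p},
\]
with $c_i=\langle\alpha_{p,i},\lambda\rangle-N$ and $\tilde c_i=\langle\alpha_{i,q},\lambda\rangle-N$. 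I would then zoom in on the lowest-valuation monomial. The basic bound $\op{val}(g_{r,p+1})\geq \lambda_r-\lambda_{p+1}$ combined with $\lambda_r=\lambda_p$ (that is, $r$ lies in the same $\lambda$-level set as $p$) yields $\op{val}(g_{r,p+1})\geq c_p+N$; and condition $(2)$ of Theorem~\ref{geom} applied at the endpoint $\alpha_q$ --- available precisely because $\beta$ is non-simple, so $\beta^\vee-\alpha_q^\vee$ is a root --- gives $\tilde c_p\geq c_p+N$. Consequently every monomial of $h_{r,p+1}$ other than $t^{c_p}g_{r,p}$ has valuation strictly greater than $c_p$, and the leading coefficient of $h_{r,p+1}$ at valuation $c_p$ is exactly $g_{r,p}(0)$.

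Comparing with the required lower bound $\op{val}(h_{r,p+1})\geq \nu^*_r-\nu^*_{p+1}$, which via $\langle\alpha_p,\mu\rangle\geq N$ (another application of condition $(2)$) and dominance of $\mu$ exceeds $c_p$ for every $r$ in the $\lambda$-block of $p$, one concludes that $g_{r,p}(0)=0$ for every such $r$.

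The proof would conclude by a Levi-block argument. The constraint $\op{val}(g_{ij})\geq \lambda_i-\lambda_j$ forces $g(0)$ to be block lower-triangular with respect to the partition of $\{1,\ldots,n+1\}$ into $\lambda$-level sets, so invertibility of $g(0)$ is equivalent to invertibility of each diagonal Levi-block. The block containing $p$ is precisely $\{r:\lambda_r=\lambda_p\}$ --- no index greater than $p$ can enter, since $\langle\alpha_p,\lambda\rangle\geq N\geq 1$ forces $\lambda_p>\lambda_{p+1}$ --- and the previous step has annihilated the column of this block indexed by $p$ at $t=0$. The block is therefore singular, yielding the desired contradiction. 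The main technical obstacle is the valuation bookkeeping that isolates $g_{r,p}(0)$ as the unique low-order term of $h_{r,p+1}$; it is here that the non-simplicity of $\beta$ is decisive, for it lets condition $(2)$ produce the critical gap $\tilde c_p-c_p\geq N$ at the far endpoint of $\beta$.
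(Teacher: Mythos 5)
Your argument is correct, and it follows the same high-level strategy as the paper's: assume $g\xi=\tilde\xi$, combine the constraint $\op{val}(g_{ij})\ge\langle\varepsilon_{i,j},\lambda\rangle$ (from $g$ stabilizing $[\lambda]$) with the constraint $\op{val}(h_{ij})\ge\langle\varepsilon_{i,j},\nu^*\rangle$ on $h=\tilde x^{-1}gx$, extract extra vanishing of entries of $g$ at $t=0$, and conclude $g(0)$ is singular. But the entries you analyze and the resulting singular pattern are genuinely different, essentially dual to the paper's. The paper examines $h_{q,j}$ for $j\ge q+1$ to prove $g_{q+1,j}(0)=0$ for those $j$; together with the automatic vanishing $g_{i,j}(0)=0$ for $i\le q<q+1\le j$, this makes the last $n+1-q$ columns of $g(0)$ supported on only $n-q$ rows. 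You instead examine $h_{r,p+1}$ for $r$ in the $\lambda$-level block of $p$ and show $g_{r,p}(0)=0$ there; since $\langle\alpha_p,\lambda\rangle\ge N\ge 1$ closes that block at $p$, the diagonal Levi block containing $p$ has a zero column and is singular. Both routes need the non-simplicity of $\beta$ in an essential way: you exploit the gap $\tilde c_p-c_p=\langle\alpha_{p+1,q},\lambda\rangle\ge\langle\alpha_q,\lambda\rangle\ge N$ and the bound $\langle\alpha_p,\mu\rangle\ge N$, while the paper exploits $\langle\varepsilon_{q,j},\beta^\vee\rangle=0$ for $j>q+1$ and $\langle\alpha_q,\mu\rangle\ge N$. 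The bookkeeping is of comparable length. Your version is slightly more conceptual in that it explicitly packages the contradiction via block lower-triangularity of $g(0)$ with respect to the $\lambda$-level-set partition, while the paper's is a direct pivot count; on the other hand, the paper's choice of entries avoids the case split on whether $r<p$ or $r=p$. Both are valid.
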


\begin{proof}
	Assume for the sake of contradiction that $\tilde \xi = g \xi$ for some $g\in G(\mathcal{O})$. Immediately we recognize that $g$ also belongs to $t^{\lambda}G(\mathcal{O})t^{-\lambda}$ since $g\in \op{Stab}([\lambda])$. Write 
	$$
	g = \left[\begin{array}{ccc}
	g_{1,1} & \hdots & g_{1,n+1}\\
	\vdots & & \vdots \\
	g_{n+1,1} & \hdots & g_{n+1,n+1}
	\end{array}
	\right]
	$$
	as an invertible matrix in $PGL_{n+1}(\mathcal{O})$; so far we know $t^{\langle \varepsilon_{i,j},\lambda \rangle}\big| g_{i,j}$ whenever $i<j$ (once again $\varepsilon_{i,j} := \alpha_i+\hdots+\alpha_{j-1}$). 
	
	By assumption, $\tilde x^{-1} g x$ belongs to $t^{\nu^*} G(\mathcal{O}) t^{-\nu^*}$. 
	Let $h = \tilde x^{-1} g x$, and let $\{h_{i,j}\}$ be the matrix coordinates of $h$. Then 
	$$
	h_{i,j}=\left\{
	\begin{array}{cc}
	g_{i,j}, & i\not \in [p,q],j\not\in [p+1,q+1]\\
	g_{i,j} - g_{q+1,j}c_i, & i \in [p,q], j\not \in [p+1,q+1]\\
	g_{i,j} + g_{i,p}b_j, & i\not\in [p,q], j\in [p+1,q+1]\\
	g_{i,j} + g_{i,p}b_j - g_{q+1,j}c_i - g_{q+1,p}c_ib_j, & ~i\in [p,q], j\in [p+1,q+1]
	\end{array}
	\right.
	$$
	where $b_j = t^{\langle \varepsilon_{p,j},\lambda \rangle -N}$ and $c_i = t^{\langle \varepsilon_{i,q+1},\lambda \rangle -N}$. 
	
	\begin{lemma}
		For $q+1\le j\le n$, $g_{q+1,j}$ is divisible by $t$. 
	\end{lemma}
	\begin{proof}
		
		Let $j$ be strictly bigger than $q+1$ and $\le n$ (if possible). From above, $h_{q,j}= g_{q,j} - g_{q+1,j}t^{\langle \varepsilon_{q,q+1},\lambda \rangle-N}$.

		From the assumption $t^{\langle \varepsilon_{q,j},\lambda+\mu-N\beta^\vee\rangle} \big | h_{q,j}$, from the observation that $\langle \varepsilon_{q,j},\lambda+\mu-N\beta^\vee\rangle \ge \langle \varepsilon_{q,j},\lambda\rangle$, and from the knowledge that $t^{\langle \varepsilon_{q,j},\lambda\rangle}\big|g_{q,j}$, 
		we see that $t^{\langle \varepsilon_{q,j}\lambda,\rangle}$ should divide $g_{q+1,j}t^{\langle \varepsilon_{q,q+1},\lambda \rangle-N}$. This of course boils down to 
		$$
		t^{\langle \varepsilon_{q+1,j},\lambda\rangle+N}|g_{q+1,j},
		$$
		so at least $t\big| g_{q+1,j}$. 
		
		For $j=q+1$, we have
		$$
		h_{q,q+1} = g_{q,q+1}+t^{\langle \varepsilon_{p,q+1},\lambda\rangle-N}(g_{q,p} - g_{q+1,p}t^{\langle\varepsilon_{q,q+1},\lambda \rangle -N}) - g_{q+1,q+1}t^{\langle\varepsilon_{q,q+1},\lambda \rangle -N}.
		$$
		Observe that $t^{\langle \varepsilon_{q,q+1},\lambda\rangle}\big | g_{q,q+1}$ and $t^{\langle \varepsilon_{q,q+1},\lambda\rangle}\big | t^{\langle \varepsilon_{p,q+1},\lambda\rangle -N}$. Furthermore, our assumption implies $t^{\langle \varepsilon_{q,q+1},\lambda\rangle}\big | h_{q,q+1}$. So 
		$t^{\langle \varepsilon_{q,q+1},\lambda\rangle}\big | g_{q+1,q+1}t^{\langle \varepsilon_{q,q+1},\lambda\rangle-N}$; therefore $t$ divides $g_{q+1,q+1}$. 
	\end{proof}
	
	We now arrive at a contradiction: the matrix $g$ is not invertible. Indeed, the entries $g_{i,j}$ where $i\le q$ and $j\ge q+1$ are divisible by $t^{\langle \varepsilon_{i,j},\lambda \rangle}$, which is a nonzero power of $t$. By the preceding lemma, the entries $g_{q+1,j}$ for $j\ge q+1$ are also divisible by $t$. This forces the lower-right $(n-q-1)\times (n-q)$ submatrix to contain the pivot points for the last $n-q$ columns (assuming $g$ is invertible), which cannot happen. 
\end{proof}

\section{Failure of $G(\mathcal{O})$-orbit methods} \label{JoshEx} 

We produce an example of a maximal component in a cyclic convolution variety which is not a closure of a $G(\mathcal{O})$-orbit on a point. For this example, we take $G = PGL_3(\C)$, $G^\vee = SL_3(\C)$, and coweights $\lambda = \mu = \nu = 2\rho^\vee$, the sum of all positive roots of $G^\vee$: in this case $2\rho^\vee = 2\varpi_1+2\varpi_2 = 2\alpha_1^\vee+2\alpha_2^\vee$; also note that $-w_0\nu = \nu$. 

The multiplicity of $V(2\rho^\vee)$ inside $V(2\rho^\vee)\otimes V(2\rho^\vee)$ can be shown by direct computation to be $3$. However, the geometric construction of this present paper (with $N=2, \beta^\vee = \alpha_1^\vee+\alpha_2^\vee$) only gives two corresponding irreducible components of the cyclic convolution variety. In fact, the triple $(\lambda,\mu,\nu)$ is also a so-called \textit{PRV triple}, and the proof of the refined PRV conjecture in \cite{Kiers} finds again two irreducible components of the same variety. These pairs of components turn out to be the same. 

The remaining third component can be described as follows. First, for every choice of complex parameter $a\in \C\setminus\{0,1\}$, the triple 
$$
\xi(a):=\left( [2\rho^\vee], 
u(a)[2\rho^\vee],[0]\right)
$$
belongs to $\cyc$, where we define  
$$
u(a):= \left[
\begin{array}{ccc}
1 & t & a t^2\\ 
0 & 1 & t \\
0 & 0 & 1
\end{array}
\right].
$$
The only nontrivial requirement to check is that $d([2\rho^\vee],u(a)[2\rho^\vee]) = 2\rho^\vee$. This follows from the equation 
$$
t^{-2\rho^\vee}\left[\begin{array}{ccc}t^2 & -t & 1-a \\\\ -\frac{t}{a-1} & \frac{a}{a-1} & 0 \\\\ \frac{1}{a} & 0 & 0 \end{array}\right] t^{-2\rho^\vee} u(a) t^{2\rho^\vee} = 
\left[
\begin{array}{ccc}
1 & 0 & 0 \\\\
-\frac{t}{a-1} & 1 & 0 \\\\
\frac{t^2}{a} & \frac{t}{a} & 1
\end{array}
\right],
$$
which is valid whenever $a\ne 0, 1$. 

For any $a\ne 0,1$, the dimension of the orbit $G(\mathcal{O})\xi(a)$ is equal to $11$, which is calculated by the same procedure as earlier in this paper. This falls short of the highest possible dimension of $\langle \rho, 6\rho^\vee\rangle = 12$, but as $a$ varies over the $1$-dimensional family $\C\setminus \{0,1\}$, the family of $G(\mathcal{O})$-orbits together form a $12$-dimensional variety. The closure of this variety is therefore an irreducible component of maximal dimension and is the elusive third component of $\cyc$. As far as we know, this is the first recorded example of an irreducible component of maximal possible dimension in a cyclic convolution variety which is not simply the closure of a single $G(\mathcal{O})$-orbit.

\section{Acknowledgements} 
We thank Prakash Belkale for providing feedback on an earlier version of this manuscript and a referee for many helpful comments on the exposition.

\begin{bibdiv}
\begin{biblist}

\bib{BD}{article}{
	AUTHOR = {Beilinson, A.},
	AUTHOR = {Drinfeld, V.},
	TITLE = {Quantization of Hitchin's integrable system and Hecke eigensheaves},
	NOTE = {http://www.math.uchicago.edu/$\sim$arinkin/langlands/}
}

\bib{G}{article}{
	AUTHOR = {Ginzburg, V.},
	TITLE = {Perverse sheaves on a loop group and Langlands duality},
	NOTE = {math.AG/9511007}
}

\bib{H}{article}{
	AUTHOR = {Haines, T. J.},
	TITLE = {Structure constants for Hecke and representation rings},
	JOURNAL = {Int. Math. Res. Not. IMRN},
	NUMBER = {39},
	YEAR = {2003},
	PAGES = {2103--2119}
}

\bib{Kam}{article}{
	AUTHOR = {Kamnitzer, J.},
	TITLE = {Hives and the fibres of the convolution morphism},
	JOURNAL = {Selecta Math. N.S.},
	VOLUME = {13},
	NUMBER = {3},
	YEAR = {2007},
	PAGES = {483--496}
}

\bib{Kiers}{article}{
	AUTHOR = {Kiers, J.},
	TITLE = {A proof of the refined PRV conjecture via the cyclic convolution variety},
	NOTE = {arXiv:1904.11543},
	YEAR = {2019}
}

\bib{K} {article} {
	AUTHOR = {Kumar, S.},
	TITLE = {Proof of Wahl's conjecture on surjectivity of the Gaussian map for flag varieties},
	JOURNAL = {American J. of Math.},
	VOLUME = {114},
	NUMBER = {6},
	PAGES = {1201--1220},
	YEAR = {1992}
}

\bib{K2}{article}{
	AUTHOR={Kumar, S.}
	TITLE={Tensor Product Decomposition}
	JOURNAL={Proceedings of the International Congress of Mathematicians, Hyderabad, India, 2010}
	YEAR={2010}
}

\bib{L}{article}{
	AUTHOR = {Lusztig, G.},
	TITLE = {Singularities, character formulas and a q-analog of weight multiplicities},
	JOURNAL = {Ast\'erisque},
	VOLUME = {101--102},
	YEAR = {1983},
	PAGES = {208--229}
}

\bib{MV}{article}{
	AUTHOR = {Mirkovi\'c, I.},
	AUTHOR = {Vilonen, K.},
	TITLE = {Geometric Langlands duality and representations of algebraic groups over commutative rings},
	JOURNAL = {Ann. of Math.},
	VOLUME = {166},
	YEAR = {2007},
	PAGES = {95--143}
}

\bib{Stein}{book}{
	AUTHOR = {Steinberg, R.},
	TITLE = {Lectures on Chevalley Groups},
	SERIES = {University Lecture Series},
	VOLUME = {66},
	PUBLISHER = {Amer. Math. Soc.},
	YEAR = {2016}
}

\bib{W} {article} {
	AUTHOR = {Wahl, J.},
	TITLE = {Gaussian maps and tensor products of irreducible representations},
	JOURNAL = {Manuscripta Math.},
	VOLUME = {73},
	YEAR = {1991},
	PAGES = {229--259}
}

\end{biblist}
\end{bibdiv}

 \end{document}